\numberwithin{equation}{section}
\title{Small cap decoupling inequalities: Bilinear methods}
\date{}
\def\R{\mathbb{R}}
\def\N{\mathbb{N}}
\def\C{\mathbb{C}}
\def\nint{\mathop{\diagup\kern-13.0pt\int}}
\def\Z{\mathbb{Z}}
\def\M{\mathcal{M}}
\def\beq{\begin{equation}}
\def\endeq{\end{equation}}
\def\beqq{\begin{equation*}}
\def\endeqq{\end{equation*}}
\def\bg{\begin{gathered}}
\def\eg{\end{gathered}}
\def\mc{\mathcal}
\theoremstyle{plain}
\newtheorem{thm}{Theorem}[section]
\newtheorem{prop}[thm]{Proposition}
\newtheorem{lem}[thm]{Lemma}
\newtheorem*{conj*}{Conjecture}
\newtheorem*{openproblem*}{Open Problem}
\renewcommand*\env@matrix[1][*\c@MaxMatrixCols c]{%
	\hskip -\arraycolsep
	\let\@ifnextchar\new@ifnextchar
	\array{#1}}
\begin{document}
\author{Changkeun Oh}
	\address{Department of Mathematics, University of Wisconsin-Madison, USA}
\email{coh28@wisc.edu}
\maketitle
\begin{abstract}
	We obtain sharp small cap decoupling inequalities associated to the moment curve for certain range of exponents $p$. Our method is based on the bilinearization argument due to Bourgain \cite{Bou17} and Bourgain and Demeter \cite{BD17}. Our result generalizes theirs to all higher dimensions.
\end{abstract}

\section{Introduction}

Let $n\ge 2$ be an integer. For an interval $\Delta\subset [0, 1]$, define an extension operator 
\beqq
E_{n, \Delta} f(x)=\int_{\Delta} f(t) e(tx_1+\dots+t^n x_n)dt.
\endeqq
Here $x=(x_1, \dots, x_n)\in \R^n$, and $e(t):=e^{it}$ for a real number $t$. Let $r\ge 1$ and $\delta\in (0, 1]$. We use $B^r$ to denote a ball in $\R^n$ of radius $\delta^{-r}$. Moreover, for a ball $B\subset \R^n$ of radius $r_B$ and center $c_B$, we use $w_B$ to stand for a suitable weight essentially support on $B$:
\beqq
w_B(x):=(1+\frac{|x-c_B|}{r_B})^{-100 n}.
\endeqq 
Let $D_p(n, r,\delta)$ be the smallest real number such that the decoupling inequality
\beq\label{cancellation}
\|E_{n, [0, 1]} f\|_{L^p(w_{B^r})} \leq D_{p}(n,r,\delta) \Big(\sum_{\Delta\subset [0, 1]: l(\Delta)=\delta} \|E_{n, \Delta} f\|^p_{L^p(w_{B^r})}\Big)^{\frac1p}
\endeq
holds for every integrable function $f: [0,1] \rightarrow \C$. Here the sum on the right hand side is over all dyadic intervals $\Delta$ of the form $[a,a+\delta]$ with $a \in \delta\Z$. 

 We define the number $p_n$ by
\beqq\label{pn}
p_n := \left\{ \begin{array}{ll}
	2k(k+1)& \textrm{with $k=\frac{n}{2}$ when $n$ is even}\\[2pt]
	2(k+1)^2& \textrm{with $k=\frac{n-1}{2}$ when $n$ is odd}.
\end{array} \right.
\endeqq
Let $[x]$ be the greatest integer less than or equal to $x$.
The main theorem is the following.
\begin{thm}\label{mainthm}
Let $n \geq 3$. For every $2 \leq p \leq p_n$ and $\epsilon >0$, there exists some positive number $C_{n,p,\epsilon} < \infty$ such that
\beq\label{180511e1.3}
D_p(n,r,\delta) \leq  C_{n,p,\epsilon}
\delta^{-(\frac12-\frac1p)-\epsilon}
\endeq
 for $r=[\frac{n+1}{2}]$ and every $1 \geq \delta >0$.
\end{thm}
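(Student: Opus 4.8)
First I would reduce to the endpoint $p=p_n$. On a ball of radius $\delta^{-r}$ with $r\ge 1$ the pieces $E_{n,\Delta}f$ have $\delta$-separated Fourier supports at the resolution seen by $w_{B^r}$ (whose Fourier transform is essentially supported in a $\delta^r$-ball, and $\delta^r\le\delta$), so $L^2$-orthogonality gives $D_2(n,r,\delta)\lesssim 1$. Interpolating the $\ell^p$ decoupling inequality between $p=2$ and $p=p_n$, with the fixed weight $w_{B^r}$, then yields \eqref{180511e1.3} for all $2\le p\le p_n$, since for $\tfrac1p=\tfrac{1-\theta}2+\tfrac\theta{p_n}$ one computes $\theta\big(\tfrac12-\tfrac1{p_n}\big)=\tfrac12-\tfrac1p$, so the interpolated exponent is exactly $-(\tfrac12-\tfrac1p)$. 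Moreover, since $\big(\sum_\Delta a_\Delta^2\big)^{1/2}\le(\#\Delta)^{\frac12-\frac1{p_n}}\big(\sum_\Delta a_\Delta^{p_n}\big)^{1/p_n}=\delta^{-(\frac12-\frac1{p_n})}\big(\sum_\Delta a_\Delta^{p_n}\big)^{1/p_n}$, it suffices to prove the stronger $\ell^2$ statement $\|E_{n,[0,1]}f\|_{L^{p_n}(w_{B^r})}\lesssim_\epsilon\delta^{-\epsilon}\big(\sum_\Delta\|E_{n,\Delta}f\|_{L^{p_n}(w_{B^r})}^2\big)^{1/2}$; this is the clean statement I would target, the factor $\tfrac12-\tfrac1p$ in \eqref{180511e1.3} being merely the cost of returning to $\ell^{p_n}$.

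\textbf{Bilinearisation.} For this $\ell^2$ endpoint I would run a Bourgain--Guth broad--narrow analysis at scale $\delta$, following Bourgain \cite{Bou17} and Bourgain--Demeter \cite{BD17}. Partitioning $[0,1]$ at a suitable medium scale, either a bounded number of medium caps dominates $\|E_{n,[0,1]}f\|_{L^{p_n}(w_{B^r})}$ --- the narrow case, handled by affinely rescaling a medium cap to $[0,1]$, which lands on a larger (hence no-harder) ball and a coarser $\delta$ and so closes under a bootstrap on $\delta$ together with the monotonicity of the decoupling constant in the ball radius --- or there are two $\sim\!1$-separated, hence transverse, intervals $I_1,I_2\subset[0,1]$ with $|E_{n,[0,1]}f(x)|\lesssim\sum_{\text{medium }\tau}|E_{n,\tau}f(x)|+\big(|E_{n,I_1}f(x)|\,|E_{n,I_2}f(x)|\big)^{1/2}$ up to admissible factors. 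Assuming $D_{p_n}(n,r,\delta)\lesssim_\eta\delta^{-\eta}$, inserting this dichotomy, improving $\eta$, and iterating down to $\eta=\epsilon$ over $O(\log\tfrac1\delta)$ steps, one is reduced to a \emph{bilinear} $\ell^2$ decoupling on $w_{B^r}$ with a clean $\delta^{-\epsilon}$ loss.

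\textbf{The bilinear estimate.} The core is: for $\sim\!1$-separated $I_1,I_2$,
\[
\big\|\,|E_{n,I_1}f\,E_{n,I_2}g|^{1/2}\,\big\|_{L^{p_n}(w_{B^r})}\lesssim_\epsilon\delta^{-\epsilon}\Big(\sum_{\Delta\subset I_1}\|E_{n,\Delta}f\|_{L^{p_n}(w_{B^r})}^2\Big)^{1/4}\Big(\sum_{\Delta\subset I_2}\|E_{n,\Delta}g\|_{L^{p_n}(w_{B^r})}^2\Big)^{1/4}.
\]
Here I would expand $|E_{n,I_1}f\,E_{n,I_2}g|^{p_n/2}$ as an integral over $(I_1\times I_2)^{p_n/2}$, apply Plancherel on $\R^n$, and estimate the number of solutions of $\sum_i t_i^\ell+\sum_j s_j^\ell=c_\ell$ ($1\le\ell\le n$, $t_i\in I_1$, $s_j\in I_2$): the separation of $I_1$ from $I_2$ keeps the relevant Jacobians non-degenerate and lets one split the $n$ moment conditions into two blocks, of sizes $r$ and $n-r$, each resolved by the known (Vinogradov) decoupling for a moment curve of that degree. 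The point is that on $B^r$ only monomials $t^\ell$ with $\ell\le r$ are "visible" at scale $\delta$ (those with $\ell>r$ contribute phases of size $O(1)$), so both blocks must have size $\le r$, i.e. $n-r\le r$; this is exactly why $p_n=r(r+1)+(n-r)(n-r+1)$ and why $r$ is taken to be the smallest admissible value $[\tfrac{n+1}2]=\lceil n/2\rceil$. (Alternatively, the transversality may be packaged through a bilinear restriction estimate for the non-degenerate $2$-surface $\{\gamma_n(t)+\gamma_n(s):t\in I_1,s\in I_2\}$, where $\gamma_n(t)=(t,t^2,\dots,t^n)$, together with the uncertainty principle on $B^r$.)

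\textbf{Main obstacle.} As in all arguments of this type the difficulty is twofold, and the two halves interact. First, establishing the bilinear estimate with the \emph{sharp} power of $\delta$ over the \emph{exact} range $p\le p_n$, uniformly in the separation of $I_1,I_2$: the splitting of the moment system and the appeal to the lower-degree decouplings must lose nothing beyond $\delta^{-\epsilon}$, and it is this tightness that pins $p_n$ down. Second, closing the broad--narrow recursion losslessly, so that the narrow contributions, the lower-degree inputs, and the bilinear term recombine to $\delta^{-\epsilon}$ and nothing worse --- which is where keeping the radius exponent $r=[\tfrac{n+1}2]$ fixed throughout, and exploiting that rescalings of medium caps only enlarge the ball, are essential. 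Making this bookkeeping go through cleanly is the technical heart of the proof.
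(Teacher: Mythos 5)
Two steps of your reduction contain genuine gaps that would prevent this argument from being completed.

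\textbf{The narrow case does not close the way you describe.} You claim that rescaling a medium cap $R=[0,K^{-m}]$ to $[0,1]$ ``lands on a larger (hence no-harder) ball,'' and invoke monotonicity of $D_p$ in the radius. In fact the parabolic rescaling $x_j\mapsto \delta^{j\gamma}x_j$ (with $\delta^\gamma = K^{-m}$) sends the ball $B^r$ of radius $\delta^{-r}$ to an \emph{anisotropic} box $\widetilde B$ of dimensions $\delta^{-r+\gamma}\times\cdots\times\delta^{-r+n\gamma}$, every side of which is \emph{smaller} than $\delta^{-r}$. By \eqref{180511e1.4} the decoupling constant is monotone the \emph{other} way (smaller ball is harder), so your bootstrap does not close by monotonicity. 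This is precisely the difficulty the paper identifies at the start of Section 2: because one starts on $B^r$ rather than $B^n$, the ball shrinks ``too fast'' under iteration. The paper's fix is to iterate only $M\approx \frac{r}{n}\log_K\delta^{-1}$ times, split $\widetilde B$ into balls of the \emph{smallest} sidelength $\delta^{-r+n\gamma}$, apply the broad--narrow inequality there, and then --- crucially --- inject the Bourgain--Demeter--Guth decoupling for the degree-$r$ moment curve $(t,\dots,t^r)$ to decompose the leftover intervals of size $\delta^{1-\frac{n\gamma}{r}+\gamma}$ all the way down to scale $\delta$. The inequality $\frac{n\gamma}{r}(\tfrac12-\tfrac1p)\geq(\frac{n\gamma}{r}-\gamma)(1-\frac{r^2+r+2}{2p})$ (valid for $r\geq n/2$) is what allows this re-decomposition to be absorbed. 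Without some such substitute, the narrow branch of your recursion is stuck.

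\textbf{The Plancherel-plus-counting strategy for the bilinear estimate does not prove a decoupling inequality.} Expanding $|E_{n,I_1}f\,E_{n,I_2}g|^{p_n/2}$ and applying Plancherel to count solutions of $\sum_i t_i^\ell+\sum_j s_j^\ell=c_\ell$ gives an estimate for the \emph{exponential-sum} (constant-coefficient) instance $f=g=\mathbf 1_{[0,1]}$; it does not give a decoupling inequality valid for arbitrary $f,g$, which is strictly stronger. Nor is the ``split the moment conditions into two blocks of sizes $r$ and $n-r$'' step a proof --- to split the system you need the Fourier supports to tensorize, which they do not: the phases are $\sum_\ell(t^\ell+s^\ell)x_\ell$, so a single dual variable $x_\ell$ couples the two blocks. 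The paper instead makes the change of variables $(t,s)\mapsto(u,v)=(t+s,t^2+s^2)$, turning the bilinear problem into a \emph{linear} decoupling for the two-dimensional surface $\mathcal M=\{(u,v,p_3(u,v),\dots,p_n(u,v))\}$ of \eqref{twodimmanifold}. The entire technical core of the paper (Section~\ref{section:linearalgebra} and Section~4) is then devoted to showing via Theorem~\ref{linearalgebra} and a Pramanik--Seeger-type bootstrap that $\mathcal M$ is, at each scale, a controlled perturbation of the monomial manifold $\mathcal M_0$ of \eqref{monomial_manifold}, so that the Guo--Zorin-Kranich decoupling \cite{GZ18} for moment manifolds can be applied. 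Your proposal contains none of this, and it is not an optional refinement: for $n\geq 6$ no decoupling for the surface $\mathcal M$ was previously known.

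\textbf{A further caution.} Your reduction targets the $\ell^2$ endpoint $\|E_{n,[0,1]}f\|_{L^{p_n}(w_{B^r})}\lesssim_\epsilon\delta^{-\epsilon}(\sum_\Delta\|E_{n,\Delta}f\|^2)^{1/2}$, from which the stated $\ell^{p_n}$ bound follows by H\"older. But this $\ell^2$ statement is strictly stronger than \eqref{180511e1.3}, the paper never claims it, and it is not clear it is true in the small-ball regime $r<n$. You should not commit to proving a stronger statement than necessary when its validity is in doubt; the paper works directly in $\ell^{p_n}$ throughout (Propositions~\ref{bilineardec}, \ref{scalebroadnarrow}, \ref{dectwodim}), and the input \cite{GZ18} is used in $\ell^p$ form.
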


An inequality of the form \eqref{180511e1.3} is called a small cap decoupling inequality, see \cite{DGW19}. This is in comparison to the decoupling inequality for the moment curve proven by Bourgain, Demeter and Guth \cite{BDG16}. 
The authors there proved estimates of the form \eqref{180511e1.3} with $r=n$, that is, on a larger ball of radius $\delta^{-n}$. From this aspect, one can also call \eqref{180511e1.3} a small ball decoupling inequality. 
\\

By summing over all the balls in a larger ball, we see that 
\beq\label{180511e1.4}
D_p(n,r_1,\delta) \leq C_nD_p(n,r_2,\delta) \text{ whenever } r_2\le r_1
\endeq
for some constant $C_n>0$.
Moreover, by Fubini's theorem, it is not difficult to see that 
\beq\label{180511e1.5}
D_p(n_1,r,\delta) \leq C_{n_1,n_2}D_p(n_2,r,\delta) \text{ whenever } n_2\le n_1
\endeq
for some constant $C_{n_1,n_2}>0$.
Hence, combining \eqref{180511e1.3}--\eqref{180511e1.5}, one would be able to obtain upper bounds on $D_p(n,r,\delta)$ for other pairs of $n$ and $r$.\\

The exponent of $\delta$ in the inequality \eqref{180511e1.3} is sharp up to $\delta^{-\epsilon}$ losses. However, we do not know whether the range of $p$ is optimal.
It is desirable to prove \eqref{180511e1.3} for an exponent $p$ that is as large as possible. As mentioned above, in the case $r=n$, Bourgain and Demeter \cite{BD15} and Bourgain, Demeter and Guth \cite{BDG16} proved \eqref{180511e1.3} with $p=n(n+1)$, which is the largest possible. In the case $n=2$ and $1\le r\le 2$, Demeter, Guth and Wang \cite{DGW19} studied \eqref{180511e1.3} carefully and obtained sharp estimates for every $p\ge 2$. Moreover, when $n=3$, they also obtained certain significant progress. We refer the readers to their paper for the precise statement of their result. It is worth mentioning that the case $(n,r)=(2,1)$ already appeared implicitly in Wooley \cite{Woo16} and Heath-Brown \cite{Hea15}. The authors there proved the cubic case of Vinogradov's mean value theorem (essentially the case $n=r=3$) by using the method of efficient congruencing. In this method, an estimate of the form \eqref{180511e1.3} played a crucial role (see also \cite{GLY19}).

 In other cases, certain partial results are known. When $(n,r)=(4,2)$, the estimate \eqref{180511e1.3} was obtained for $p=12$ in Bourgain \cite{Bou17}, Bourgain and Demeter \cite{BD16}. Moreover, Bourgain \cite{Bou17} applied such an estimate to the Riemann zeta function, and obtained improved bounds in the Lindel\"of hypothesis.\\

%
%
%

Next let us describe the strategy of the proof. We follow the idea of \cite{Bou17} and apply a bilinear method. First, we show that Theorem \ref{mainthm} follows from a bilinear decoupling inequality (see Proposition \ref{bilineardec}) by applying the broad-narrow analysis of Bourgain and Guth \cite{BG}. Next, we use the observation in \cite{Bou17} and \cite{BD17} to transfer a bilinear extension operator for the moment curve to a linear extension operator for certain two-dimensional surface (see \eqref{twodimmanifold}) via a change of variables (see Proposition \ref{dectwodim}). In the end, we will prove certain sharp decoupling inequalities for the resulting two-dimensional surfaces.   
%

The main obstacle in this approach is that it is difficult to obtain sharp decouplings for the above mentioned two-dimensional surfaces when the dimension $n$ is large. For $n=3$, the two-dimensional surfaces are hypersurfaces with nonzero Gaussian curvature. In this case, sharp decoupling inequalities have already been established in \cite{BD17}. For $n=4,5$, the resulting two-dimensional surfaces  have been studied in \cite{BD16}, \cite{Guo17}, and one can apply the decoupling inequalities obtained there to prove the claimed results in Theorem \ref{mainthm}.  However, for $n \geq 6$, there are no known decouplings for the related two-dimensional surfaces.

To overcome the obstacle, we apply a bootstrapping argument that essentially allows us to view the relevant two dimensional surfaces as small perturbations of certain moment surfaces. Moreover, sharp decoupling inequalities for these moment surfaces have already been proved in \cite{GZ18}. Such a bootstrapping argument can be dated back to the work of Pramanik and Seeger \cite{PS07}. To enable this bootstrapping process, we need to make sure that our surfaces satisfy certain translation-invariant properties at every scale (see Theorem \ref{linearalgebra}). This was achieved by some complicated linear algebra computations in Section \ref{section:linearalgebra}.\\
%

{\bf Notation:} Throughout the paper, the notation $E_{n,\Delta}$ will be sometimes abbreviated to $E_{\Delta}$. The number $r$ will be always $[\frac{n+1}{2}]$. We write $A \lesssim B$ if $A \leq cB$ for some constant $c>0$, $A \sim_K B$ if $c_K^{-1}A \leq B \leq c_K A$ for some $c_K>0$ depending on $K$, and $A=B+O_K(C)$ if $|A-B| \leq c_K C$ for some $c_K>0$ depending on $K$. The constants $c$ and $c_K$ will in general depend on fixed parameters such as $p,n$ and sometimes on the variable parameters $\epsilon, K$ but not the parameter $\delta$.

\subsection*{Acknowledgement}
The author would like to thank his advisor Shaoming Guo for introducing him to this problem and for his guidance throughout the project. The author also would like to thank Po-Lam Yung and Ruixiang Zhang for discussions on the linear algebra part of the paper (See Section 3).

\section{Bilinearization}
In this section, we will first reduce the linear decoupling inequality \eqref{cancellation} to the bilinear decoupling inequality \eqref{bilinaerdec2} by combining the broad-narrow analysis of Bourgain and Guth \cite{BG} and the linear decoupling inequalities of  Bourgain, Demeter and Guth \cite{BDG16} for larger balls. 
The argument of Bourgain and Guth \cite{BG} is carried out via an inductive argument on the radii of balls. 
However, in our case, a ball shrinks relatively ``fast" as we iterate because we start with a smaller ball $B^r$ (instead of $B^n$ as in \cite{BDG16}). Thus, the inductive argument does not work as efficiently as it does in \cite{BD15} or \cite{BDG16}. Instead of relying only on induction, what we will do is, after applying a ``smaller" number of steps of certain inductive hypothesis, to use the decoupling for the moment curve in \cite{BDG16} (see \eqref{bdg2} below) to decompose the frequency into the desired scale. 

Next, we will prove the bilinear decoupling inequality \eqref{bilinaerdec2}.
Instead of working with a bilinear extension operator for the moment curve, we will apply a change of variables (see \eqref{change_of_variable}) and transfer it to a linear extension operator for the two-dimensional manifold \eqref{twodimmanifold}; a very similar argument already appeared in \cite{BD16}. In the end, we will prove the desired  decoupling inequality (see \eqref{dectwodim2}) for the two dimensional manifold \eqref{twodimmanifold} in the remaining sections. 
\\

To run the method of Bourgain and Guth \cite{BG}, we need to introduce the notion of transversality.
Let $K \geq 1$ be a large number. Let $J_1,J_2$ be dyadic intervals with side length $K^{-1}$. These two intervals are called $K^{-1}$-transverse if the distance between them is greater than or equal to $K^{-1}$.

\begin{prop}\label{bilineardec}
		Let $n\geq 3$. Let $K \geq 1$ be sufficiently large. Let $J_1,J_2 \subset [0,1]$ be $K^{-1}$-transverse. For every $2 \leq p \leq p_n$, $\epsilon>0$, $0<\delta<K^{-1}$, and every integrable function $f:[0,1] \rightarrow \C$, we have
	\begin{equation}\label{bilinaerdec2}
	\Big\||E_{n, J_1}f E_{n, J_2}f|^{\frac12}\Big\|_{L^p(w_{B^r})}
	\leq C_{p,K,\epsilon} \delta^{-(\frac12-\frac1p) -\epsilon}
	\biggl(\sum_{\Delta \subset [0,1]: l(\Delta)=\delta } \|E_{n, \Delta}f\|_{L^p(w_{B^r})}^p \biggr)^{\frac1p}.
	\end{equation}
\end{prop}

The following theorem states the decoupling inequality  for the moment curve by Bourgain and Demeter \cite{BD15} and Bourgain, Demeter and Guth \cite{BDG16}. Theorem \ref{mainthm} will be deduced by combining Proposition \ref{bilineardec} and Theorem \ref{bdg}.

\begin{thm}[\cite{BD15, BDG16}]\label{bdg} Let $n \geq 2$. For every $2 \leq p \leq n(n+1)$ and $\epsilon>0$, and every integrable function $f:[0,1] \rightarrow \C$, we have
	\beq\label{bdg2}
	\|E_{n, [0, 1]} f\|_{L^p(w_{B^n})} \leq C_{n, p,\epsilon} \delta^{-(\frac12-\frac1p)-\epsilon} \Big(\sum_{\Delta\subset [0, 1]: l(\Delta)=\delta} \|E_{n, \Delta} f\|^p_{L^p(w_{B^n})}\Big)^{\frac1p}.
	\endeq
\end{thm}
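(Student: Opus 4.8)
This is the Bourgain--Demeter--Guth $\ell^p$-decoupling theorem for the moment curve (equivalently, the main conjecture in Vinogradov's mean value theorem), and the plan is as follows.

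\emph{Step 1: reduction to $\ell^2$ decoupling and set-up.} Since there are $\sim\delta^{-1}$ caps $\Delta$ of length $\delta$, Hölder's inequality gives $\big(\sum_\Delta\|E_{n,\Delta}f\|_{L^p}^2\big)^{1/2}\le\delta^{-(\frac12-\frac1p)}\big(\sum_\Delta\|E_{n,\Delta}f\|_{L^p}^p\big)^{1/p}$, so it is enough to prove the $\ell^2$ form
\beq
\|E_{n,[0,1]}f\|_{L^p(w_{B^n})}\le C_{n,p,\epsilon}\,\delta^{-\epsilon}\Big(\sum_{\Delta\subset[0,1]:\,l(\Delta)=\delta}\|E_{n,\Delta}f\|_{L^p(w_{B^n})}^2\Big)^{1/2},\qquad 2\le p\le n(n+1).
\endeq
Write $\mathrm{Dec}_n(p,\delta)$ for the best constant here. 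By interpolation with the trivial $L^2$ bound it suffices to treat $p=n(n+1)$, and I would argue by induction on $n$, the base case $n=2$ being the parabola decoupling of Bourgain--Demeter. Two structural facts recur: \emph{parabolic rescaling}, which exploits the affine symmetries of the moment curve to give the near-submultiplicativity $\mathrm{Dec}_n(p,\delta_1\delta_2)\lesssim\mathrm{Dec}_n(p,\delta_1)\,\mathrm{Dec}_n(p,\delta_2)$; and \emph{lower-dimensional input}, namely that projecting the moment curve onto any coordinate subspace yields, after rescaling, a non-degenerate curve of smaller degree, to which the inductive hypothesis applies with only a $\delta^{-\epsilon}$ loss.

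\emph{Step 2: multilinear reduction.} Applying the broad--narrow argument of Bourgain--Guth at a fixed spatial scale $K^{-1}$: on each ball, either one $K^{-1}$-cap carries a large share of the mass (the narrow case, which one iterates and eventually absorbs into lower-dimensional decoupling on affine subspaces combined with the rescaling of Step 1), or $n$ caps that are $K^{-1}$-transverse contribute comparably. This reduces the bound on $\mathrm{Dec}_n(p,\delta)$, with acceptable dependence on $K$, to the $n$-linear transverse estimate $\big\||E_{n,J_1}f|\cdots|E_{n,J_n}f|\big\|_{L^{p/n}(w_{B^n})}^{1/n}\lesssim_{K,\epsilon}\delta^{-\epsilon}\prod_{i=1}^n\big(\sum_{\Delta\subset J_i}\|E_{n,\Delta}f\|_{L^p(w_{B^n})}^2\big)^{1/(2n)}$ for $K^{-1}$-transverse $J_1,\dots,J_n$.

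\emph{Step 3: the multi-scale iteration — the crux.} For $1\le j\le n$ introduce mixed-norm quantities $A_j$ that measure the $L^{p/n}$ norm of $\prod_i|E_{n,\tau_i}f|$, where the $\tau_i\subset J_i$ range over caps of an intermediate scale and the norm is computed by first integrating on balls of radius $\delta^{-j}$ and then averaging. The iteration climbs $j=1\to2\to\cdots\to n$. At $j=1$ the relevant decoupling is essentially free, since $E_{n,\tau}f$ with $l(\tau)=\delta$ is roughly constant on balls of radius $\delta^{-1}$. Each step $j\to j+1$ combines two engines: the \emph{multilinear Kakeya inequality} of Bennett--Carbery--Tao (in Guth's form), applied to the tube families dual to the wave-packet decompositions of the $E_{n,J_i}f$, which upgrades the integration domain from radius-$\delta^{-j}$ to radius-$\delta^{-(j+1)}$ balls (``ball inflation''); and \emph{lower-dimensional decoupling}, i.e.\ the inductive hypothesis applied to the degree-$m$ shadows of the moment curve for $m<n$, which passes from the intermediate caps to finer ones. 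Interleaving these with parabolic rescaling and chaining over all $j$ yields a closed inequality that controls $\mathrm{Dec}_n(p,\delta)$ by products of $\mathrm{Dec}_m(p,\cdot)$ with $m<n$ and a strictly sublinear power of $\mathrm{Dec}_n(p,\cdot)$ itself.

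\emph{Step 4: bootstrapping and sharpness.} Set $\gamma_n(p):=\limsup_{\delta\to0}\log\mathrm{Dec}_n(p,\delta)/\log(1/\delta)$. Feeding the inductive vanishing $\gamma_m(p)=0$ for $m<n$ into the chained inequality of Step 3 gives $\gamma_n(p)\le(1-\eta)\,\gamma_n(p)$ for some $\eta=\eta(n,p)>0$, valid precisely when $p\le n(n+1)$; hence $\gamma_n(p)=0$, which is the $\ell^2$ estimate with $\delta^{-\epsilon}$ loss, and Step 1 concludes. I expect the main obstacle to be the design of Step 3: choosing the mixed norms $A_j$ and the exact form of the ball-inflation inequality so that the balance of Hölder exponents between the multilinear Kakeya step and the lower-dimensional decoupling step makes the self-improvement in Step 4 actually close, and then verifying that the numerology admits exactly the range $2\le p\le n(n+1)$ — the endpoint $p=n(n+1)$ is forced by the scaling of the lower-order contributions and is genuinely optimal, as one sees by comparing a single cap with the full family and from the classical lower bound for the number of solutions of Vinogradov's system.
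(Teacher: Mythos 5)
The paper does not prove this statement: Theorem \ref{bdg} is imported verbatim from \cite{BD15, BDG16} and used as a black box (it enters only inside the iteration in the proof of Theorem \ref{mainthm}, to decompose down to scale $\delta$ after the bilinear step has been exhausted). So there is no in-paper proof to compare against; what you have written is an outline of the argument of the cited references. As an outline it is faithful: the passage from the $\ell^p$ form \eqref{bdg2} to $\ell^2$ decoupling by H\"older over the $\sim\delta^{-1}$ caps, the reduction to the critical exponent $p=n(n+1)$ by interpolation with $L^2$ orthogonality, parabolic rescaling, the Bourgain--Guth multilinear reduction, the interleaving of multilinear Kakeya (ball inflation from radius $\delta^{-j}$ to radius $\delta^{-(j+1)}$) with lower-dimensional decoupling, and the final bootstrap $\gamma_n(p)\le(1-\eta)\gamma_n(p)$ are exactly the ingredients of \cite{BDG16}.

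As a proof, however, it has a genuine gap, and you have located it yourself: Step 3 is not carried out. The entire difficulty of \cite{BDG16} lies in the precise definition of the averaged mixed-norm quantities $A_j$ (the choice of Lebesgue and $\ell^2$ exponents at each intermediate scale), in the exact statement of the ball-inflation inequality at each dimension $j$ (a multilinear Kakeya estimate for neighborhoods of $j$-dimensional osculating planes of the moment curve, not merely for tubes), and in verifying that the resulting chain of inequalities closes with $\eta>0$ precisely for $p\le n(n+1)$; none of this is checkable from the sketch. One smaller inaccuracy: in the narrow case of the Bourgain--Guth decomposition for a nondegenerate curve, one does not pass to ``lower-dimensional decoupling on affine subspaces''; the non-transverse contribution is absorbed by parabolic rescaling and induction on scales alone, and lower-dimensional decoupling enters only later, inside the multilinear iteration. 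If your intent is to cite the theorem, as the paper does, the outline is adequate; if the intent is to prove it, Step 3 must be written out in full.
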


\begin{proof}[Proof of Theorem \ref{mainthm} assuming Proposition 2.1.]
As $n$ is always fixed, here and below we will always abbreviate $E_{n, I}$ to $E_I$. We use the broad-narrow analysis from \cite{BG}.
For each $x \in \R^n$, we consider the collection of significant intervals, defined by
\[
\mathcal{C}(x) := \big\{R=(a+[0,K^{-1}]) \subset [0,1]: a \in K^{-1}\Z, \; |E_{[0,1]}f(x)| \leq
10^{-1}K |E_Rf(x)|  \big\}.
\]
By considering two possible cases $|\mathcal{C}(x)| \geq 3$ and $|\mathcal{C}(x)| \leq 2$, we obtain the following pointwise estimate:
\[
|E_{[0,1]}f(x)| \leq 10 \max_{l(R)=K^{-1}} |E_Rf(x)| + K \max_{\mathrm{dist}(R_1,R_2)\geq K^{-1} } \prod_{i=1}^{2}|E_{R_i}f(x)|^{\frac12}.
\]
We raise both sides of the last display to the $p$-th power, replace the \text{max} on the right hand side by an $l^p$-norm, integrate over $B^r$, and obtain
\beqq
\| E_{[0,1 ]}f\|_{L^p(w_{B^r})} \le C_p \biggl(\sum_{l(R)=K^{-1}} \|E_Rf\|_{L^p(w_{B^r})}^p \biggr)^{\frac1p	}
+C_{p,K}  \biggl(\sum_{\mathrm{dist}(R_1, R_2)\ge K^{-1}} 
\Big\|\prod_{i=1}^{2}|E_{R_i}f|^{\frac12}\Big\|_{L^p(w_{B^r})}^p \biggr)^{\frac1p}.
\endeqq
Next, we apply Proposition \ref{bilineardec} to the last term and obtain 
\beq\label{broadnarrow}
\| E_{[0,1 ]}f\|_{L^p(w_{B^r})} \le C_p \biggl(\sum_{l(R)=K^{-1}} \|E_Rf\|_{L^p(w_{B^r})}^p \biggr)^{\frac1p	}
+C_{p,K,\epsilon} \delta^{-(\frac12-\frac1p)-\epsilon} \biggl( \sum_{l(\Delta)=\delta}
\|E_{\Delta}f\|_{L^p(w_{B^r})}^p \biggr)^{\frac1p}.
\endeq

In (\ref{broadnarrow}), the last term is already of the desired form, the form of the right hand side of \eqref{cancellation}. We bound the first term on the right hand side of \eqref{broadnarrow} using an iteration argument: We rescale the interval $R$ to the whole interval $[0,1]$ and apply (\ref{broadnarrow}) again. To be more precise, let $M$ be the constant such that $K^{-M}=\delta^{r/n}$, and we will prove that 
\beq\label{BDiteration}
\begin{split}
\| E_{[0,1 ]}f\|_{L^p(w_{B^r})} & \le (C_p)^{m} \biggl(\sum_{l(R)=K^{-m}} \|E_Rf\|_{L^p(w_{B^r})}^p \biggr)^{\frac1p	}\\
& +C_{p,K,\epsilon}(C_pC_{n,p,\epsilon})^{2m} m \delta^{-(\frac12-\frac1p)-\epsilon} \biggl( \sum_{l(\Delta)=\delta}
\|E_{\Delta}f\|_{L^p(w_{B^r})}^p \biggr)^{\frac1p}
\end{split}
\endeq
for every integer $m$ with $1 \leq m \leq M$. Here, $C_{n,p,\epsilon}$ is the constant in \eqref{bdg2}. 

Note that if the interval is smaller than $\delta^{r/n}$ then by the uncertainty principle the last component of the curve $\{(t,t^2,\ldots,t^n): t\in [0,1] \}$ does not play a role on the ball $B^r$. Hence, we cannot apply an induction hypothesis anymore. Thus, we stop iterating if the side length of an interval $R$ reaches $\delta^{r/n}$.

We already proved \eqref{BDiteration} when $m=1$. Suppose that \eqref{BDiteration} holds true for some $m=m_0 <M$. 
We will show that \eqref{BDiteration} holds true for $m=m_0+1$. By the induction hypothesis, we obtain
\beq\label{inductionhypothesis}
\begin{split}
	\| E_{[0,1 ]}f\|_{L^p(w_{B^r})} & \le (C_p)^{m_0} \biggl(\sum_{l(R)=K^{-m_0}} \|E_Rf\|_{L^p(w_{B^r})}^p \biggr)^{\frac1p	}\\
	& +C_{p,K,\epsilon}(C_pC_{n,p,\epsilon})^{2m_0} m_0 \delta^{-(\frac12-\frac1p)-\epsilon} \biggl( \sum_{l(\Delta)=\delta}
	\|E_{\Delta}f\|_{L^p(w_{B^r})}^p \biggr)^{\frac1p}.
\end{split}
\endeq
We fix an interval $R$ with side length $K^{-m_0}$.
For the sake of simplicity, we assume that $R=[0,K^{-m_0}]$. We take $\gamma$ such that $\delta^{\gamma}=K^{-m_0}$. By applying a change of variables, we obtain 
\beqq
\|E_R f\|_{L^p(w_{B^r})}=\delta^{\gamma} \Big\|\int_0^1 f(\delta^{\gamma}t)
e(\delta^{\gamma}tx_1+\cdots+\delta^{n\gamma}t^nx_n)\,dt\Big\|_{L^p(w_{B^r})}.
\endeqq
By applying the change of variables
\beqq
\delta^{\gamma} x_1\mapsto x_1, \dots, \delta^{n\gamma} x_n\mapsto x_n, 
\endeqq
we obtain 
\beqq
\|E_Rf\|_{L^p(w_B^r)}=
\delta^{\gamma(1-\frac{n(n+1)}{2p})} \Big\|\int_0^1 f(\delta^{\gamma}t)e(tx_1+\cdots+t^nx_n)\,dt\Big\|_{L^p(w_{\widetilde{B}})}.
\endeqq
Here $\widetilde{B}$ is a rectangle box of dimension $\delta^{-r+\gamma}\times \dots\times \delta^{-r+n\gamma}$. 

Now we split the rectangular box $\widetilde{B}$ into balls $B'$ of radius $\delta^{-r+n\gamma}$, and apply \eqref{broadnarrow} to each $B'$. Afterwards, we raise everything to the $p$-th power, sum over $B'\subset \widetilde{B}$ and take the $p$-th root. In the end, the first term on the right hand side in \eqref{inductionhypothesis} is bounded by 
\beqq
\begin{split}
	&(C_p)^{m_0}\delta^{\gamma(1-\frac{1}{p}\frac{n(n+1)}{2})}(\sum_{l(I)=K^{-1}} \|E_{I} \widetilde{f}\|^p_{L^p(w_{\widetilde{B}})})^{\frac1p}
	\\&+C_{p,K,\epsilon}(C_p
	C_{n,p,\epsilon})^{2m_0}
	\delta^{\gamma(1-\frac{n(n+1)}{2p})}  \delta^{-(1-\frac{n\gamma}{r})(\frac{1}{2}-\frac{1}{p})-\epsilon} \Big(\sum_{l(\Delta)=\delta^{1-\frac{n\gamma}{r}}}\|E_{\Delta} \widetilde{f}\|^p_{L^p(w_{\widetilde{B}})}\Big)^{\frac1p}.
\end{split}
\endeqq
Here $\widetilde{f}(t):=f(\delta^{\gamma}t).$ We change all variables back and obtain 
\beqq
\begin{split}
	&(C_p)^{m_0+1}
	\Big(\sum_{l(I) = \delta^{\gamma}K^{-1}} \|E_I f\|^p_{L^p(w_{B^r})}\Big)^{\frac1p} 
	\\&+C_p 
	C_{p,K,\epsilon}(C_p
	C_{n,p,\epsilon})^{2m_0} \delta^{-(1-\frac{n\gamma}{r})(\frac{1}{2}-\frac{1}{p})-\epsilon} \Big(\sum_{l(\Delta)=\delta^{1-\frac{n\gamma}{r}+\gamma }}\|E_{\Delta} f\|^p_{L^p(w_{B^r})}\Big)^{\frac1p}.
\end{split}
\endeqq
To see how to further process the second term, we take $\Delta=[0, \delta^{1-\frac{n\gamma}{r}+\gamma }]$ as an example. The general case can be handled similarly after making an affine change of variables. In this case, we apply the decoupling inequality in Theorem \ref{bdg} for the moment curve $(t,t^2,\ldots,t^r)$. This can be done by viewing $x_{r+1}, \dots, x_n$ in the phase function $tx_1+\dots+t^n x_n$ of the extension operator $E_{\Delta}$ as dummy variables. As a consequence, we  obtain
\begin{displaymath}
	\begin{split}
		&\|E_{R} f\|_{L^p(w_{B^r})}   
		\leq (C_p)^{m_0+1}
		\biggl(\sum_{l(I)=\delta^{\gamma}K^{-1}} \|E_If\|_{L^p(w_{B^r})}^p \biggr)^{\frac1p}
		\\&
		+C_{p,K,\epsilon}(C_pC_{n,p,\epsilon})^{2m_0+2} \delta^{-(\frac12-\frac1p)-\epsilon} \delta^{\frac{n\gamma}{r}(\frac12-\frac1p) 
		-(\frac{n\gamma}{r}-\gamma)(1-\frac{r^2+r+2}{2p })}
		\biggl( \sum_{l(\Delta)=\delta}
		\|E_{\Delta}f\|_{L^p(w_{B^r})}^p \biggr)^{\frac1p}.
	\end{split}
\end{displaymath}
Note that for every $r \geq n/2$, we have
\[
\frac{n\gamma}{r}(\frac12-\frac1p)  -
(\frac{n\gamma}{r}-\gamma)(1-\frac{r^2+r+2}{2p }) \geq 0.
\]
Thus, we obtain
\beqq
\begin{split}
	\| E_{[0,1 ]}f\|_{L^p(w_{B^r})} & \le (C_p)^{m_0+1} \biggl(\sum_{l(R)=K^{-m_0-1}} \|E_Rf\|_{L^p(w_{B^r})}^p \biggr)^{\frac1p	}\\
	& +C_{p,K,\epsilon}(C_pC_{n,p,\epsilon})^{2m_0+2} (m_0+1) \delta^{-(\frac12-\frac1p)-\epsilon} \biggl( \sum_{l(\Delta)=\delta}
	\|E_{\Delta}f\|_{L^p(w_{B^r})}^p \biggr)^{\frac1p}.
\end{split}
\endeqq
By the induction argument, this completes the proof of \eqref{BDiteration}.\\

Recall that $K^{-M}=\delta^{r/n}$. By \eqref{BDiteration} with $m=M$, we obtain
\beqq
\begin{split}
	\| E_{[0,1 ]}f\|_{L^p(w_{B^r})} & \le
	\delta^{-\frac{r\log{C_p}}{n\log{K}} }  \biggl(\sum_{l(R)=\delta^{r/n}} \|E_Rf\|_{L^p(w_{B^r})}^p \biggr)^{\frac1p	}\\
	& +C_{p,K,\epsilon} \delta^{-\frac{2r\log{(C_pC_{n,p,\epsilon})}}{n\log{K}}} (\log_K{\delta^{-\frac{r}{n}}}) \delta^{-(\frac12-\frac1p)-\epsilon} \biggl( \sum_{l(\Delta)=\delta}
	\|E_{\Delta}f\|_{L^p(w_{B^r})}^p \biggr)^{\frac1p}.
\end{split}
\endeqq
We apply Plancherel's theorem and a trivial bound at $L^{\infty}$ to control the first term on the right hand side. In the end, we will take $K$ to be large enough and obtain
\[
\| E_{[0,1 ]}f\|_{L^p(w_{B^r})} \leq
\tilde{C}_{p,K,\epsilon}\delta^{-\epsilon} \bigl(  \delta^{-(1-\frac{r}{n})(1-\frac{2}{p}) }+\delta^{-(\frac12-\frac1p)} \bigr) \biggl( \sum_{l(\Delta)=\delta}
\|E_{\Delta}f\|_{L^p(w_{B^r})}^p \biggr)^{\frac1p}.
\]
It suffices to note that $$(1-\frac{r}{n})(1-\frac{2}{p}) \leq \frac12-\frac1p$$ for every $r \geq n/2$ and $p \geq 2$. This finishes the proof of Theorem \ref{mainthm} assuming Proposition 2.1.
\end{proof}
%
We prove Proposition \ref{bilineardec} in the next step. In previous decoupling papers \cite{BD15, BDG16}, a large separation of intervals (the transversality constant $K^{-1}$) is essential  because of the use of multilinear Kakeya inequalities. However, in this paper, we do not (directly) use any multilinear Kakeya inequality. In fact, we will see that there is certain significant advantage if the separation of intervals is small (see the statement of Proposition \ref{scalebroadnarrow}). 

This phenomenon is particular to the approach we are using: We will apply a change of variables (see \eqref{change_of_variable}) to transfer the problem of bilinear decoupling for the moment curve to the problem of linear decoupling for  a two dimensional manifold (given by \eqref{twodimmanifold}). This change of variables is non-linear. As a consequence, it is hard to find an explicit expression of the manifold, not  to mention to prove certain sharp decoupling inequalities. However, we will see that the smaller the transversality constant is, the more the induced manifold will behave like a monimial manifold. Moreover, a sharp decoupling inequality for such a moment manifold has already been established in \cite{GZ18}.

The following proposition states a bilinear decoupling inequality with a smaller transversality constant, compared with the one in Proposition \ref{bilineardec}.

\begin{prop}\label{scalebroadnarrow}
	Let $n \geq 3$ and $\epsilon>0$. Let $K \geq 1$ be sufficiently large. Let $0< \delta<K^{-1}$. Let $J_1,J_2 \subset [0,\delta^{\epsilon}]$ be $\delta^{\epsilon}K^{-1}$-transverse. 
	For every $2 \leq p \leq p_n$ and every integrable function $f:[0,1] \rightarrow \C$, we have
	\[
	\Big\||E_{J_1}f E_{J_2}f|^{\frac12}\Big\|_{L^p(w_{B^r})}
	\leq C_{p,K,\epsilon} \delta^{-(\frac12-\frac1p) -C\epsilon}
	\biggl(\sum_{\Delta \subset [0,\delta^{\epsilon}]: l(\Delta)=\delta } \|E_{\Delta}f\|_{L^p(w_{B^r})}^p \biggr)^{\frac1p}.
	\]
\end{prop}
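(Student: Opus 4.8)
The plan is to deduce Proposition \ref{scalebroadnarrow} from Proposition \ref{bilineardec} by a parabolic-type rescaling that exploits the scaling symmetry of the moment curve. First I would observe that the intervals $J_1,J_2\subset[0,\delta^\epsilon]$ with separation at least $\delta^\epsilon K^{-1}$ become, after the affine change of variables $t\mapsto \delta^{\epsilon}t$ sending $[0,\delta^\epsilon]$ to $[0,1]$, intervals $J_1',J_2'\subset[0,1]$ that are exactly $K^{-1}$-transverse in the sense defined before Proposition \ref{bilineardec}. Under this substitution the extension operator transforms according to the moment-curve scaling $E_{n,\delta^\epsilon I}f(x_1,\dots,x_n)=\delta^\epsilon E_{n,I}(f(\delta^\epsilon\,\cdot))(\delta^\epsilon x_1,\delta^{2\epsilon}x_2,\dots,\delta^{n\epsilon}x_n)$, so bilinear and decoupled quantities at frequency scale $\delta$ on a ball $B^r$ of radius $\delta^{-r}$ get converted into the corresponding quantities for $\widetilde f:=f(\delta^\epsilon\,\cdot)$ at frequency scale $\delta^{1-\epsilon}$, but now weighted by an anisotropic box $\widetilde B$ of dimensions $\delta^{-r+\epsilon}\times\delta^{-r+2\epsilon}\times\cdots\times\delta^{-r+n\epsilon}$ rather than a genuine ball.

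The next step is to reconcile this anisotropic box with the ball required in Proposition \ref{bilineardec}. I would cover $\widetilde B$ by finitely overlapping balls $B'$ of the smallest sidelength, namely radius $\delta^{-r+n\epsilon}$, apply Proposition \ref{bilineardec} on each $B'$ with the new small parameter $\delta':=\delta^{1-\epsilon}$ (noting that $\delta'<K^{-1}$ is automatic once $\delta$ is small, and $\delta'^{-r}=\delta^{-r(1-\epsilon)}\ge\delta^{-r+n\epsilon}$ exactly when $r\ge n/2$, which holds since $r=[\frac{n+1}{2}]$), then raise to the $p$th power and sum over $B'\subset\widetilde B$, exactly mirroring the $B'$-summation trick already used in the proof of Theorem \ref{mainthm}. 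This produces a bound with loss $\delta^{-(1-\epsilon)(\frac12-\frac1p)-\epsilon'}$ on the right-hand side, decoupled into intervals $\Delta'$ of length $\delta^{1-\epsilon}$ inside $[0,1]$. Finally I would undo the rescaling, changing all variables back: the $\Delta'$-intervals of length $\delta^{1-\epsilon}$ in $[0,1]$ pull back to intervals of length $\delta^{1-\epsilon+\epsilon}\cdot$(something) — more carefully, after a further decoupling from scale $\delta^{1-\epsilon}\cdot\delta^\epsilon=\delta$ down using the trivial flat decoupling or, better, organizing the rescaling so the pulled-back intervals land exactly at length $\delta$ inside $[0,\delta^\epsilon]$ — and the weight $w_{\widetilde B}$ pulls back to $w_{B^r}$. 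Collecting exponents, the $\delta$-power loss is $(1-\epsilon)(\frac12-\frac1p)+O(\epsilon)\le\frac12-\frac1p+O(\epsilon)$, which is absorbed into $\delta^{-(\frac12-\frac1p)-C\epsilon}$ for a suitable absolute constant $C$ depending on $p$ and $n$.

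The main obstacle I anticipate is bookkeeping the exponents through the anisotropic rescaling so that the final power of $\delta$ is genuinely $-(\frac12-\frac1p)-C\epsilon$ and not worse: one must check that passing from the honest ball $B^r$ to the box $\widetilde B$ and back (a loss of $\delta^{-(r n\epsilon - r\epsilon)\cdot(\text{something})}$ type from the $B'$-covering, compounded with the $(1-\epsilon)$-contraction of the decoupling exponent) stays within $O(\epsilon)$ of the target. This is precisely the kind of inequality verified in the proof of Theorem \ref{mainthm} via the identity $K^{-M}=\delta^{r/n}$ and the nonnegativity of $\frac{n\gamma}{r}(\frac12-\frac1p)-(\frac{n\gamma}{r}-\gamma)(1-\frac{r^2+r+2}{2p})$ for $r\ge n/2$; the analogous computation here is routine but must be done with care. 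A secondary, more conceptual point worth flagging in the write-up is \emph{why} one bothers passing to the smaller transversality constant $\delta^\epsilon K^{-1}$ at all: as the author notes, the nonlinear change of variables \eqref{change_of_variable} turns the bilinear problem into a linear decoupling on a two-dimensional manifold, and shrinking the transversality makes that induced manifold a small perturbation of a monomial manifold for which sharp decoupling is available from \cite{GZ18} — so Proposition \ref{scalebroadnarrow} is the correct intermediate statement to iterate against, and the short rescaling argument above is exactly what certifies that the stronger-looking hypothesis of small separation costs nothing.
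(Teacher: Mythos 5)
Your proposal goes in the wrong logical direction and, as written, would make the paper circular. The paper's chain of deductions is Proposition~\ref{dectwodim} $\Rightarrow$ Proposition~\ref{scalebroadnarrow} $\Rightarrow$ Proposition~\ref{bilineardec} $\Rightarrow$ Theorem~\ref{mainthm}. Indeed, immediately after stating Proposition~\ref{scalebroadnarrow} the paper writes that ``Proposition~\ref{bilineardec} follows from Proposition~\ref{scalebroadnarrow} via a simple scaling argument,'' and then says ``It remains to prove Proposition~\ref{scalebroadnarrow}.'' So Proposition~\ref{bilineardec} is not an independent input; it is a downstream consequence of the very statement you are asked to prove. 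Deducing Proposition~\ref{scalebroadnarrow} from Proposition~\ref{bilineardec}, as you propose, closes a loop with nothing inside it. The paper's actual proof of Proposition~\ref{scalebroadnarrow} is the bilinear-to-linear step: it uses the substitution $u=t+s$, $v=t^2+s^2$ (the change of variables \eqref{change_of_variable}) to rewrite $E_{J_1}fE_{J_2}f$ as an extension operator $E^{\mathcal M}_{L(J_1,J_2)}g$ for the two-dimensional manifold \eqref{twodimmanifold}, constructs the grids $\mathcal G_i$ so that each $L(\Delta_1,\Delta_2)$ fits in an $O(\delta)$ square, invokes Proposition~\ref{dectwodim}, and finishes with Cauchy--Schwarz. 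This is precisely why the paper bothers to pass to the small transversality constant $\delta^\epsilon K^{-1}$ in the first place: it is only at small scale that $\mathcal M$ becomes a perturbation of the monomial manifold $\mathcal M_0$ in \eqref{monomial_manifold} for which \cite{GZ18} applies, and this is not available at transversality $K^{-1}$.

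Even setting circularity aside, the rescaling bookkeeping does not close cleanly. After the change of variables $t\mapsto\delta^\epsilon t$, the weight $w_{B^r}$ becomes a weight on a box $\widetilde B$ of dimensions $\delta^{-r+\epsilon}\times\cdots\times\delta^{-r+n\epsilon}$, and you cover by balls $B'$ of radius $\delta^{-r+n\epsilon}$. Proposition~\ref{bilineardec} applied on $B'$ requires the decoupling parameter $\delta'$ to satisfy $(\delta')^{-r}\le \delta^{-r+n\epsilon}$, i.e.\ $\delta'\ge\delta^{1-n\epsilon/r}$, \emph{not} $\delta'=\delta^{1-\epsilon}$ as you wrote; because $n/r>1$ the two are different, and your inequality ``$\delta'^{-r}\ge\delta^{-r+n\epsilon}$ exactly when $r\ge n/2$'' does not hold with $\delta'=\delta^{1-\epsilon}$ (with that choice one gets $\delta'^{-r}=\delta^{-r+r\epsilon}$, which is \emph{larger} than $\delta^{-r+n\epsilon}$ for all $r<n$, i.e.\ your ball is too small for that $\delta'$). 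With the corrected $\delta'=\delta^{1-n\epsilon/r}$, pulling back produces intervals of length $\delta^{1-(n/r-1)\epsilon}>\delta$, so one more decoupling step (as in the proof of Theorem~\ref{mainthm}) is needed, costing an additional $\delta^{-O(\epsilon)}$. This is a fixable bookkeeping issue and would stay within the allowed $\delta^{-C\epsilon}$ loss, but the argument as stated does not land on intervals of length $\delta$. The fundamental gap, however, is the circularity: Proposition~\ref{bilineardec} cannot be taken as given when proving Proposition~\ref{scalebroadnarrow}.
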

Proposition \ref{bilineardec} follows from Proposition \ref{scalebroadnarrow} via a simple scaling argument. We leave out the details. 

It remains to prove Proposition \ref{scalebroadnarrow}. Given two intervals $J_1, J_2\subset [0, \delta^{\epsilon}]$ that are $\delta^{\epsilon} K^{-1}$-transverse, we follow the idea of convolving two measures that are supported on $J_1$ and $J_2$ separately, and consider the support of the output measure
\beq\label{190924e2.7}
\{(t+s, t^2+s^2, \dots, t^n+s^n): t\in J_1, s\in J_2\}.
\endeq
Define 
\beq\label{change_of_variable}
u(t, s):=t+s,\ \  v(t, s):=t^2+s^2,
\endeq
and the set 
\[
L(J_1,J_2):=\big\{ (t+s,t^2+s^2): (t, s) \in J_1 \times J_2  \big\}.
\]
Under the assumption that $J_1$ and $J_2$ are $\delta^{\epsilon} K^{-1}$-transverse, it is not difficult to see that the Jacobian matrix $\frac{\partial(u, v)}{\partial(t, s)}$ is non-singular on $J_1\times J_2$. 
 This allows us to write $t$ and $s$ as functions of $u$ and $v$. Furthermore, we can write \eqref{190924e2.7} as 
\beq\label{twodimmanifold}
\mathcal{M}=
\big\{(u,v,p_3(u,v),\ldots, p_n(u,v): (u,v) \in L(J_1,J_2))
\big\}
\endeq
where 
\beq\label{newton}
\begin{split}
p_k(u,v)&:=t(u, v)^k+s(u, v)^k \;\;\; \mathrm{for} \;\;\; k=1,2,\ldots,n.
\end{split}
\endeq

Given an integer $n'\ge 3$, smooth functions $P_3, \dots, P_{n'}$, and a surface 
\beqq
\mc{M}':=\{(u, v, P_3(u,v), \ldots,  P_{n'}(u,v)\},
\endeqq
we define the associated extension operator 
\beqq
E^{\mc{M}'}_{\Box}g(x):=\int_{\Box} g(u, v) e(ux_1+ vx_2+P_3(u, v) x_3+\dots+P_{n'}(u, v) x_{n'})dudv
\endeqq
for $x'\in \R^{n'}$ and a set $\Box\subset \R^2$. 
Proposition \ref{scalebroadnarrow} follows from the decoupling for the two-dimensional surface $\mathcal{M}$ given by \eqref{twodimmanifold}.
\begin{prop}\label{dectwodim}
	Let $n \geq 3$ and $\epsilon>0$. Let $K \geq 1$ be sufficiently large. Let $0<\delta<K^{-1}$. Let $J_1,J_2 \subset [0,\delta^{\epsilon}]$ be $\delta^{\epsilon}K^{-1}$-transverse.
	For every $2 \leq p \leq p_n/2$ and every integrable function $g:[0,1]^2 \rightarrow \C$, we have
	\beq\label{dectwodim2}
	\big\|E_{L(J_1,J_2)}^{\mathcal{M}}g\big\|_{L^{p}(w_{B^r})}
	\leq C_{p,K,\epsilon} \delta^{-2(\frac12-\frac{1}{p})-C\epsilon }
	\biggl(\sum_{\square : \square \cap L(J_1,J_2) \neq \emptyset,\, l(\square)=\delta } \big\|E_{\square}^{\mathcal{M}}g\big\|_{L^{p}(w_{B^r})}^{p} \biggr)^{\frac{1}{p}}.
	\endeq
	Here, the sum runs over squares $\square$ of the form $[a,a+\delta] \times [b,b+\delta]$ with $a,b \in \delta\Z$ that have non-empty intersection with $L(J_1,J_2)$.
\end{prop}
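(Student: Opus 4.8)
The plan is to prove Proposition \ref{dectwodim} by a bootstrapping/induction-on-scales argument, treating the surface $\mathcal{M}$ as a small perturbation of a monomial surface at every scale. Write $\nu := \delta^{\epsilon}$, so $J_1, J_2 \subset [0,\nu]$ are $\nu K^{-1}$-transverse; the key structural fact, to be established separately (this is Theorem \ref{linearalgebra}), is that after the change of variables \eqref{change_of_variable} the functions $p_3, \dots, p_n$, when restricted to a small square $\square$ of side $\sigma$ and rescaled back to $[0,1]^2$, again take the form ``monomial plus lower-order error,'' with the error controlled by a power of $K^{-1}$; moreover this persists at every scale of the iteration. The monomial model is the surface $(u, v, u^3 + \text{l.o.t.}, \dots)$ whose relevant pieces are affinely equivalent to the monomial surfaces for which Guo and Zorin-Kranich \cite{GZ18} have proved sharp $\ell^p$ decoupling up to the critical exponent.

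First I would set up the induction. Let $\mathrm{Dec}(\sigma)$ denote the best constant in \eqref{dectwodim2} with $\delta$ replaced by a scale $\sigma$ (and the ball radius adjusted accordingly via \eqref{180511e1.4}); the goal is $\mathrm{Dec}(\delta) \lesssim_{\epsilon} \delta^{-2(\frac12 - \frac1p) - C\epsilon}$. I would partition $L(J_1,J_2)$ into squares of an intermediate side length $\rho$ with $\delta < \rho < \nu$, to be optimized (something like $\rho = \delta^{\theta}$ for a fixed $\theta \in (0,1)$). On each such square, rescale $\mathcal{M}$ back to unit scale: by Theorem \ref{linearalgebra} the rescaled surface is, up to an affine transformation of $\R^n$ that does not increase decoupling constants, a $K^{-c}$-perturbation of a fixed monomial surface $\mathcal{M}_0$. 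Apply the sharp decoupling for $\mathcal{M}_0$ from \cite{GZ18} — valid in the range $2 \le p \le p_n/2$, which is exactly why that exponent appears in the statement — to decouple from scale $\rho$ down to scale $\delta$ at unit scale, then undo the rescaling. This yields a recursive inequality of the shape
\[
\mathrm{Dec}(\delta) \le C_{K,\epsilon}\, \Big(\tfrac{\rho}{\delta}\Big)^{2(\frac12 - \frac1p) + C'\epsilon} \cdot \mathrm{Dec}(\rho)^{?} \cdot (\text{perturbation losses}),
\]
and iterating $O(\log(1/\delta)/\log(1/\rho))$ times collapses the recursion to the claimed bound, since the exponent $2(\frac12 - \frac1p)$ is additive across scales and the per-step losses are at most $\delta^{-C\epsilon}$ after absorbing $K$-dependent constants (here one uses $K$ fixed but large, $\delta \to 0$).

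To justify applying a decoupling theorem for a fixed surface to a perturbation of it, I would invoke the standard principle (as in \cite{BD17}, \cite{PS07}) that a decoupling inequality for $\mathcal{M}_0$ at scale $\tau$ is stable under $C^N$-perturbations of size $\lesssim \tau^{N}$ for $N$ large depending on $n, p$; since the perturbation error here is $O(K^{-c})$ and each rescaling step only zooms by a bounded factor $\rho/\delta$ in each iteration, one chooses $K$ large enough (depending on $\epsilon$, $n$, $p$, and the number of iteration steps, which is $\lesssim_{\epsilon} 1$) that the accumulated perturbation never destroys the decoupling. The main obstacle is precisely this bookkeeping: one must verify that the ``monomial plus error'' structure is genuinely \emph{scale-invariant} — that zooming into a subsquare and rescaling reproduces a surface of the same type with comparably small error — rather than degrading after a few steps. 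That is the content of the linear-algebra computations in Section \ref{section:linearalgebra} (Theorem \ref{linearalgebra}), and is the heart of the argument; given that input, the decoupling part is a fairly routine, if lengthy, induction-on-scales together with a citation to \cite{GZ18}. A secondary technical point is handling the geometry of $L(J_1,J_2)$ itself — it is a curvilinear quadrilateral, not a square — but since it is comparable to a $\nu \times \nu$ square with smooth boundary at scale $\nu K^{-1}$, this only costs harmless constants in $K$.
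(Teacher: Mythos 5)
Your overall architecture matches the paper's: an induction-on-scales bootstrap in which, at each step, one recenters and rescales, invokes Theorem \ref{linearalgebra} to recognize the rescaled surface as a perturbation of a fixed monomial manifold $\mathcal{M}_0$, absorbs the perturbation via the uncertainty principle, and applies the sharp decoupling of Guo--Zorin-Kranich \cite{GZ18} for $\mathcal{M}_0$. The paper iterates in steps of $\delta^{\epsilon/r}$, roughly $r/\epsilon$ times, which fits your ``optimize $\rho=\delta^{\theta}$'' framing, and it starts the induction at scale $\delta^{\epsilon}$ by flat $L^{2}$ orthogonality.

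However, there is a genuine gap in your accounting of the perturbation size, and it sits at the load-bearing step. You assert that the rescaled surface is ``a $K^{-c}$-perturbation of a fixed monomial surface'' and propose to choose $K$ large to compensate. This cannot close: $K$ is fixed before $\delta$, while the ball radius one must beat goes to infinity as $\delta \to 0$. In fact Theorem \ref{linearalgebra} produces an error of the form $O_{K}\bigl(\zeta|(u,v)|^{k+1}+|(u,v)|^{k+2}\bigr)$, where the implicit constant \emph{grows} with $K$ (it involves inverting matrices whose determinants are only bounded below by negative powers of $K$). The smallness comes entirely from $\zeta=\delta^{\epsilon}$, which is available precisely because $J_{1},J_{2}\subset[0,\delta^{\epsilon}]$ --- this is the entire reason Proposition \ref{scalebroadnarrow} restricts to the small interval rather than $[0,1]$, a point your sketch does not use. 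Concretely: after recentering at $(a,b)$ with $\zeta=\delta^{\epsilon}$ and rescaling by $\delta^{m\epsilon/r}$, the error on $[0,1]^{2}$ becomes $O_{K}(\delta^{\epsilon})$, while the rescaled ball has radius $\delta^{-(r-m\epsilon)}$; since the iteration runs over $r\le m< r\epsilon^{-1}$, this radius stays $\geq \delta^{-\epsilon}$, so $(\text{error})\times(\text{radius})=O_{K}(1)$ and the uncertainty principle applies, with the iteration terminating at $m=r\epsilon^{-1}$ exactly when the target scale $\delta$ is reached. In your version one would have $(\text{error})\times(\text{radius})\gtrsim K^{-c}\delta^{-\epsilon}\to\infty$ for any fixed $K$, so the bootstrap would not close. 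Replacing the $C^{N}$-stability heuristic by the above $(\text{error})\times(\text{radius})=O(1)$ criterion, with $\delta^{\epsilon}$ rather than $K^{-c}$ as the smallness parameter, would repair the argument.

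A secondary correction: the monomial manifold $\mathcal{M}_0$ is not $(u,v,u^{3}+\text{l.o.t.},\dots)$. Newton's identity $p_{i}=up_{i-1}-\tfrac{u^{2}-v}{2}p_{i-2}$ gives $p_{3}=\tfrac{3}{2}uv-\tfrac{1}{2}u^{3}$, and more generally the lowest-degree term of $p_{i}$ is $\tfrac{2k+1}{2^{k}}uv^{k}$ for odd $i=2k+1$ and $\tfrac{1}{2^{k}}v^{k+1}$ for even $i=2k+2$, so $\mathcal{M}_{0}=\bigl\{(u,v,\tfrac{3}{2}uv,\tfrac{1}{2}v^{2},\tfrac{5}{4}uv^{2},\dots)\bigr\}$, a moment manifold of degree $r=[\tfrac{n+1}{2}]$. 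It is this ACK-type system --- not a cubic in $u$ --- that falls under Theorem 1.1 and Example 1.4 of \cite{GZ18} with critical exponent $p_{n}/2$.
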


\begin{proof}[Proof of Proposition \ref{scalebroadnarrow} assuming Proposition \ref{dectwodim}.]

Consider a collection $\{L(\Delta_1,\Delta_2) \}_{\substack{\Delta_1 \subset J_1, \Delta_2 \subset J_2}}$, where $\Delta_i \subset [0,1]$ is a dyadic interval with side length $\delta$.

It is not difficult to find a collection of $10^4$ square grids $\{\mathcal{G}_i\}_{1 \leq i \leq 10^4 }$ satisfying the followings:
\begin{enumerate}
	\item Each square in each grid $\mathcal{G}_i$ has a dyadic side length  $16\delta$,
	\item For every $\Delta_1,\Delta_2$, there exists $i$ such that ${L}({\Delta_1,\Delta_2})$ is contained in a square from $\mathcal{G}_i$.
\end{enumerate}
Also, a simple computation shows that there exists a small positive constant $c_K$ independent of the choice of $\Delta_1,\Delta_2$ and the parameter $\delta$ such that
\beq\label{smallballcontaining}
B_{\Delta_1,\Delta_2}:=
B\big((X,Y),c_K\delta^{1+\epsilon}\big)
\subset L(\Delta_1,\Delta_2),
\endeq
where
\[
\Delta_1=[a,a+\delta], \;\; \Delta_2=[b,b+\delta]
\]
for some $a,b$
and $X = a+b+\delta, Y = a^2+b^2+(b-a)\delta+\delta^2.$
Here, $B\big((X,Y),r \big)$ denotes the ball of radius $r$ centered at the point $(X,Y)$. 
We denote by $Q_{\Delta_1,\Delta_2}$ the square from some grid $\mathcal{G}_i$ that contains $L(\Delta_1,\Delta_2)$. By the property \eqref{smallballcontaining}, for each square from a grid $\mathcal{G}_i$, the number of sets of the form $L(\Delta_1,\Delta_2)$ that intersect such a square is $O_K(\delta^{\epsilon})$.

We use the change of variables: $u=t+s$ and $v=t^2+s^2$. Let $g(u,v)=f(t)f(s)(|\det{\frac{\partial (u,v)}{\partial (t,s)}}|)^{-1}$. Then
\beqq
\begin{split}
	E_{J_1}f(x)E_{J_2}f(x)
	&=\int_{J_1}\int_{J_2}f(t)f(s)e((t+s)x_1+(t^2+s^2)x_2+\cdots+(t^n+s^n)x_n)\,dtds
	\\&=\int_{L(J_1,J_2)}g(u,v)e(ux_1+vx_2+p_3(u,v)x_3+\cdots+p_n(u,v)x_n )\,dudv
	\\&=E^{\mathcal{M}}_{L(J_1,J_2)}g(x).
\end{split}
\endeqq
Since
\[
\Big\||E_{J_1}f E_{J_2}f|^{\frac12}\Big\|_{L^{p_n}(w_{B^r})}  = \big\|E_{L(J_1,J_2)}^{\mathcal{M}}g\big\|_{L^{p_n/2}(w_{B^r})}^{1/2},
\]
Proposition \ref{scalebroadnarrow} follows from
\beq\label{reduction}
	\big\|E_{L(J_1,J_2)}^{\mathcal{M}}g\big\|_{L^{p_n/2}(w_{B^r})}^{1/2}
\leq C_{K,\epsilon} \delta^{-(\frac12-\frac{1}{p_n}) -C\epsilon}
\biggl(\sum_{\Delta \subset [0,\delta^{\epsilon}]: l(\Delta)=\delta } \|E_{\Delta}f\|_{L^{p_n}(w_{B^r})}^{p_n} \biggr)^{\frac{1}{p_n}}.
\endeq
By using the grids constructed at the beginning of the proof, we obtain
\beqq\label{grideq}
E_{L(J_1,J_2)}^{\mathcal{M}}g = \sum_{i=1}^{10^4}
\sum_{\substack{\Delta_1,\Delta_2: Q_{\Delta_1,\Delta_2} \in \mathcal{G}_i\\  l(\Delta_1)=l(\Delta_2)=\delta}  } E^{\mathcal{M}}_{L(\Delta_1,\Delta_2)}g. 
\endeqq
Therefore, 
\beq\label{2.17}
\big\|E_{L(J_1,J_2)}^{\mathcal{M}}g\big\|_{L^{p_n/2}(w_{B^r})}^{1/2}
\lesssim \sup_{1 \leq i \leq 10^4} \biggl( \bigg\|\sum_{\substack{\Delta_1,\Delta_2: Q_{\Delta_1,\Delta_2} \in \mathcal{G}_i\\  l(\Delta_1)=l(\Delta_2)=\delta}  } E^{\mathcal{M}}_{L(\Delta_1,\Delta_2)}g \bigg\|_{L^{p_n/2}(w_{B^r})}^{1/2} \biggr).
\endeq
We apply Proposition \ref{dectwodim} and bound \eqref{2.17}  by
\begin{gather*}
C_{K,\epsilon} \delta^{-(\frac12-\frac{2}{p_n})-C\epsilon }
\sup_{1 \leq i \leq 10^4}
\biggl(\sum_{\square \in \mathcal{G}_i : \square \cap L(J_1,J_2) \neq \phi,\, l(\square)=16\delta } \Big\|
\sum_{\substack{\Delta_1,\Delta_2: Q_{\Delta_1,\Delta_2}=\Box\\  l(\Delta_1)=l(\Delta_2)=\delta}  }
E_{L(\Delta_1,\Delta_2)}^{\mathcal{M}}g\Big\|_{L^{p_n/2}(w_{B^r})}^{p_n/2} \biggr)^{\frac{1}{p_n}}.
\end{gather*}
By the property that $|\{ (\Delta_1,\Delta_2): Q_{\Delta_1,\Delta_2}=\square  \}| = O(\delta^{-\epsilon})$ for each $\square \in \mathcal{G}_i$, the last expression can be further bounded by
\[
CC_{K,\epsilon} \delta^{-(\frac12-\frac{2}{p_n})-C'\epsilon }
\biggl(\sum_{l(\Delta_1)=l(\Delta_2)=\delta } \Big\|
E_{L(\Delta_1,\Delta_2)}^{\mathcal{M}}g\Big\|_{L^{p_n/2}(w_{B^r})}^{p_n/2} \biggr)^{1/p_n}.
\]
Since
\[
\Big\|
E_{L(\Delta_1,\Delta_2)}^{\mathcal{M}}g\Big\|_{L^{p_n/2}(w_{B^r})}^{\frac12} = \Big\|\big|E_{\Delta_1}f E_{\Delta_2}f\big|^{\frac12}\Big\|_{L^{p_n}(w_{B^r})},
\]
we obtain
\[
\big\|E_{L(J_1,J_2)}^{\mathcal{M}}g\big\|_{L^{p_n/2}(w_{B^r})}^{1/2} 
\leq
CC_{K,\epsilon} \delta^{-(\frac12-\frac{2}{p_n})-C'\epsilon }
\biggl(\sum_{ l(\Delta_1)=l(\Delta_2)=\delta } \Big\|\big|E_{\Delta_1}f E_{\Delta_2}f\big|^{\frac12}\Big\|_{L^{p_n}(w_{B^r})}^{p_n} \biggr)^{1/p_n}.
\]
By applying the Cauchy-Schwarz inequality twice, the right hand side can be bounded by
\[
\begin{split}
 & CC_{K,\epsilon}
\delta^{-(\frac12-\frac{2}{p_n})-C'\epsilon }
\biggl(\sum_{l(\Delta_1)=l(\Delta_2)=\delta } \|E_{\Delta_1}f  \|_{L^{p_n}(w_{B^r})}^{p_n/2}
\|
E_{\Delta_2}f
\|_{L^{p_n}(w_{B^r})}^{p_n/2}
 \biggr)^{1/p_n}
 \\&
 \leq CC_{K,\epsilon}
 \delta^{-(\frac12-\frac{2}{p_n})-C'\epsilon }
 \biggl(\sum_{\Delta \subset [0,\delta^\epsilon]: l(\Delta)=\delta } \|E_{\Delta}f  \|_{L^{p_n}(w_{B^r})}^{p_n/2}
 \biggr)^{2/p_n}
\\& \leq
C_0C_{K,\epsilon}
\delta^{-(\frac12-\frac{1}{p_n})-C'\epsilon }
\biggl(\sum_{\Delta \subset [0,\delta^{\epsilon}] : l(\Delta)=\delta } \|
E_{\Delta}f \|_{L^{p_n}(w_{B^r})}^{p_n} \biggr)^{1/p_n}.
\end{split}
\]
Therefore, we obtain the inequality $\eqref{reduction}$ and this completes the proof of Proposition \ref{scalebroadnarrow} assuming Proposition 2.4.
\end{proof}

For the rest of the paper, we give a proof of Proposition \ref{dectwodim}.

\section{Some linear algebra}\label{section:linearalgebra}
In this section, we will make some preparation for the proof of Proposition \ref{dectwodim}. To be more precise, we will show that, after certain affine transformations, the manifold $\mc{M}$ (defined in \eqref{twodimmanifold}) is very close to some moment manifold (see Theorem \ref{linearalgebra} and \eqref{monomial_manifold}). \\
%

For each $(a,b) \in [0,1]^2$, we define the manifold $\M_{(a,b)}$ to be
\[
\M_{(a,b)} = \big\{ (u,v,p_3(u+a,v+b),\ldots,p_n(u+a,v+b))  \big\}.
\]
Here, the polynomials $p_i$ are defined in \eqref{newton}.
Next we define a relation between two manifolds. For $i=1,2$, let $\M_i$ be a manifold given by
\[
\M_i = \{(u,v,P_{3,i}(u,v),\ldots,P_{n,i}(u,v)) \},
\]
where $P_{j,i}$ is a real-valued function for each $i$ and $j$. We say that $\M_1 \cong_{M} \M_2$ if there exist an invertible linear transformation 
$M:\R^n \rightarrow \R^n$ and some vector ${\bf{b}} \in \R^n$ such that
\[
(u,v,P_{3,2}(u,v),\ldots,P_{n,2}(u,v))^{\intercal} = M\big(u,v,P_{3,1}(u,v),\ldots,P_{n,1}(u,v)\big)^{\intercal}+\bf{b}^{\intercal}
\]
for all $u,v \in \R$. Here, the superscript $\intercal$ refers to a transpose.\\

The main result in this section is the following. Recall that the functions $u,v$ are defined in \eqref{change_of_variable}.

\begin{thm}\label{linearalgebra}
	Let $n \geq 3$, $K \geq 100$ and $0 \leq \zeta \leq 1$.
	Suppose that  $(a,b)=(u(\alpha,\beta),v(\alpha,\beta))$ for some $0 \leq \alpha,\beta \leq \zeta$ with $|\alpha -\beta| \geq \zeta K^{-1}$. Then there exists an invertible linear transformation $M$ such that 
	\[
	\mathcal{M}_{(a,b)} \cong_M \big\{ (u,v,q_3(u,v),\ldots, q_n(u,v )) \big\},
	\]
	where
	\begin{displaymath}\label{desired}
	q_i(u,v) = \left\{ \begin{array}{ll}
	\frac{2k+1}{2^{k}}uv^{k} + O_{K}\big(\zeta|(u,v)|^{k+1}+ |(u,v)|^{k+2} \big)& \textrm{with $k=\frac{i-1}{2}$ when $i$ is odd}\\[3pt]
	\frac{1}{2^{k}}v^{k+1}+ O_{K}\big(\zeta|(u,v)|^{k+1} + |(u,v)|^{k+2} \big)& \textrm{with $k=\frac{i-2}{2}$ when $i$ is even}.
	\end{array} \right.
	\end{displaymath}
	Moreover, $\det{M} \sim_K 1$.
\end{thm}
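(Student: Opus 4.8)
The plan is to work directly with the Newton power sums $p_k(u,v) = t(u,v)^k + s(u,v)^k$ and exploit the fact that $(u,v) = (t+s, t^2+s^2)$ are themselves elementary-symmetric-type coordinates in $t,s$. First I would translate by $(a,b) = (u(\alpha,\beta), v(\alpha,\beta))$: in the $(t,s)$ variables, the shift $(u,v) \mapsto (u+a, v+b)$ corresponds to $(t,s) \mapsto$ (some analytic map sending $0$ to $(\alpha,\beta)$), but the cleaner route is to note that $t(u+a,v+b), s(u+a,v+b)$ are the two roots of a quadratic whose coefficients are affine in $(u,v)$, with the roots equal to $\alpha, \beta$ when $(u,v) = 0$. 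So I would parametrize $t = \alpha + \tilde t(u,v)$, $s = \beta + \tilde s(u,v)$ where $\tilde t, \tilde s$ vanish at the origin, and Taylor-expand. The transversality hypothesis $|\alpha - \beta| \geq \zeta K^{-1}$ guarantees the relevant Jacobian $\partial(u,v)/\partial(t,s) = \begin{pmatrix} 1 & 1 \\ 2t & 2s \end{pmatrix}$ has determinant $2(s-t)$ bounded below by $\sim \zeta K^{-1}$, so the inverse map is analytic with controlled derivatives; this is where the $O_K$ constants and the factors of $\zeta$ enter.

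Next I would compute the Taylor expansion of $p_i(u+a, v+b) = (\alpha + \tilde t)^i + (\beta + \tilde s)^i$ in powers of $(u,v)$, and then find the linear transformation $M$ that strips off the lower-order terms (the constant, and the contributions proportional to $u, v, p_3, \ldots, p_{i-1}$) so that each $q_i$ begins at homogeneous degree $\lceil i/2 \rceil$. The key structural claim is that the leading homogeneous part of $q_i$ is, up to the stated normalization, $\frac{2k+1}{2^k} u v^k$ (for $i = 2k+1$ odd) or $\frac{1}{2^k} v^{k+1}$ (for $i = 2k+2$ even). I would verify this by the following mechanism: modulo the span of $1, u, v$ and lower $p_j$'s, one has $t \equiv \tilde t$, $s \equiv \tilde s$ of order $1$, and crucially $\tilde t + \tilde s = u$ (exactly, since $u$-coordinate is linear) while $\tilde t \tilde s$ and $\tilde t^2 + \tilde s^2$ are determined by $v$ and lower data; tracking the bidegree, $u = t+s$ has "weight" $1$ and $v = t^2 + s^2$ (equivalently $ts$) has "weight" $2$, and $p_i$ expanded in $u$ and $ts$ picks out its leading behavior — the coefficient $\frac{2k+1}{2^k}$ and $\frac{1}{2^k}$ should drop out of the identities $p_{2k+1} = u \cdot (\text{stuff}) $ and $p_{2k+2} = (\text{stuff in } v)$ after substituting $s = -t$ at the top order (which is forced mod the ideal generated by $u$ in the even case), i.e. evaluating at the "symmetric point" $\alpha = -\beta$ in the appropriate graded sense. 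The monomial manifold \eqref{monomial_manifold} emerges precisely as this $\zeta \to 0$, degree-leading limit.

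For the size control $\det M \sim_K 1$, I would observe that $M$ is built as a product: a translation, then a block-triangular matrix whose diagonal entries are the leading coefficients (nonzero constants $\sim 1$, independent of $K$ in fact), times the off-diagonal "clearing" entries which are polynomials in $\alpha, \beta$ and in the Taylor coefficients of $\tilde t, \tilde s$ — all of size $O_K(1)$ since those coefficients involve negative powers of $(s-t) \gtrsim \zeta K^{-1}$ but the final normalization absorbs the $\zeta$-dependence into the error terms rather than into $M$; a careful bookkeeping shows the determinant is a product of the $\sim 1$ diagonal entries times $1$, hence $\sim_K 1$. The remainder estimates $O_K(\zeta |(u,v)|^{k+1} + |(u,v)|^{k+2})$ come from: (i) terms of homogeneous degree exactly $k+1$ that are present but not of the pure monomial form — these carry a factor of $\alpha$ or $\beta$ hence are $O(\zeta)$ since $0 \le \alpha, \beta \le \zeta$ — and (ii) all terms of degree $\geq k+2$, bounded crudely by analyticity of the local inverse on the relevant region. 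The main obstacle, and the reason Section \ref{section:linearalgebra} is a separate section "achieved by some complicated linear algebra computations", is step two: explicitly diagonalizing/triangularizing the degree structure simultaneously for all $i$ from $3$ to $n$ and checking that the diagonal entries are exactly $\frac{2k+1}{2^k}$ and $\frac{1}{2^k}$ with nothing degenerate happening — this requires an inductive argument on $i$ to produce $M$ row by row, peeling off one power sum at a time, and verifying that at each stage the leading coefficient is nonzero so that the next clearing step is well-defined.
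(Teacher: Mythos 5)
Your proposal captures the overall architecture of the paper's argument: Taylor-expand $p_i(u+a,v+b)$ around the origin, identify the leading monomial, and build $M$ as a unit lower-triangular transformation that subtracts off lower power sums, with the $\zeta$-factors in the error coming from $|\alpha|,|\beta|\le\zeta$ and the $K$-dependence entering through the transversality $|\alpha-\beta|\ge\zeta K^{-1}$. The $\zeta=0$ leading-coefficient computation (your weight-grading argument, or equivalently the paper's Newton recursion $p_i = up_{i-1} - \tfrac{u^2-v}{2}p_{i-2}$) and the triangularity giving $\det M\sim 1$ are both essentially correct.

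However, you skip the one genuinely nontrivial step, which is the paper's claim \eqref{existence}: that the \emph{entire} degree-$\le k$ Taylor truncation of $p_{2k+1+j}^{a,b}$ is a linear combination of the degree-$\le k$ truncations of $p_1^{a,b},\dots,p_{2k}^{a,b}$, with coefficients of size $O_K(\zeta)$. Your ``peel off one power sum at a time, check the leading coefficient is nonzero'' does not address this: what must be killed is not a single leading coefficient but the whole degree-$\le k$ jet, which in $(u,v)$-coordinates amounts to $\binom{k+2}{2}-1=k(k+3)/2$ linear conditions on only $2k$ unknowns, an overdetermined system once $k\ge 2$. The paper's resolution is exactly the ``complicated linear algebra'' of Section \ref{section:linearalgebra}: (a) Lemma \ref{changeofvariables} shows the order-$k$ $(u,v)$-jet is an invertible block-triangular image of the order-$k$ $(t,s)$-jet, and since $p_i(t,s)=t^i+s^i$ has \emph{all mixed partials vanishing}, the effective constraints collapse to the $2k$ pure $t$- and $s$-derivatives, giving a square $2k\times 2k$ system; (b) this system is nonsingular by Kalman's generalized Vandermonde formula (Lemma \ref{kalman}), $\det=(\alpha-\beta)^{k^2}\ne 0$. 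Neither the vanishing of mixed partials nor the Vandermonde identity appears in your sketch, so the solvability of the clearing step is unjustified.

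A second, related gap: your appeal to ``analyticity of the local inverse'' to control the coefficients is in tension with the bound you need. Since the Jacobian determinant $2(s-t)$ is only $\gtrsim\zeta K^{-1}$, naive differentiation of the inverse map produces coefficients that scale like \emph{negative} powers of $\zeta$, not the required $|w_i|\lesssim_K\zeta$. The paper avoids this by proving the claim at $\zeta=1$ (where the Vandermonde determinant is $\sim_K 1$) and then rescaling $(\alpha,\beta)=(\zeta\bar\alpha,\zeta\bar\beta)$; the scaling multiplies the $i$-th weight by $\zeta^{2k+1+j-i}\le\zeta$, which is what actually yields $|w_i|\lesssim_K\zeta$. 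Without that rescaling step the $\zeta$-dependence does not come out right.
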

\begin{proof}[Proof of Theorem \ref{linearalgebra}.]
	
	For each $i \in \N$, the gradient $\nabla_{u,v}^{i} $ is defined by
\beq\label{grad}
\nabla_{u,v}^i (f) :=  \begin{pmatrix}
\frac{\partial^i f}{\partial u^i} & \frac{\partial^i f}{\partial u^{i-1} \partial v } & \cdots  & \frac{\partial^i f}{\partial v^i}
\end{pmatrix}^\intercal.
\endeq
To obtain good approximation formula for $p_k(u+a,v+b)$, a natural idea is to apply Taylor's expansion. Taking partial derivatives of $p_k$ in terms of $u$ and $v$ can get very complicated. We will instead compute partial derivatives of $p_k$ in terms of $t$ and $s$, and then apply formulas for derivatives of implicit functions. In this approach, the following lemma will be particularly useful.   

%
\begin{lem}\label{changeofvariables}
	Let $k \geq 3$.
	Let $u(t,s)=t+s$ and $v(t,s)=t^2+s^2$.
	Suppose that $(a,b)=(u(\alpha,\beta),v(\alpha,\beta))$ for some $\alpha,\beta$ with $\alpha \neq \beta$.
	Then there exists an invertible matrix $A_{a,b}$ depending on $a,b$ such that 
	\[
	\begin{pmatrix}
	\nabla_{u,v} & \nabla^2_{u,v} & \cdots & \nabla^k_{u,v}
	\end{pmatrix}^\intercal f |_{(a,b)}
	= \Big(A_{a,b}\cdot \begin{pmatrix}
	\nabla_{t,s} & \nabla^2_{t,s} & \cdots & \nabla^k_{t,s}
	\end{pmatrix}^\intercal\Big) g \Big|_{(\alpha,\beta)},
	\]
	for every smooth function $f(u, v)$ and $g(t, s):=f(u(t, s), v(t, s))$.	
\end{lem}

\begin{proof}[Proof of Lemma \ref{changeofvariables} ]
We first show that there exists a matrix $B_{\alpha,\beta}$ such that 
\[
\begin{pmatrix}
\nabla_{t,s} & \nabla^2_{t,s} & \cdots & \nabla^k_{t,s}
\end{pmatrix}^\intercal g|_{(\alpha,\beta)}
= \Big(B_{\alpha,\beta} \cdot
\begin{pmatrix}
\nabla_{u,v} & \nabla^2_{u,v} & \cdots & \nabla^k_{u,v}
\end{pmatrix}^\intercal \Big)f|_{(a,b)}.
\]
Afterwards we will show that the matrix $B_{\alpha,\beta}$ is invertible. In the end, we can take $A_{a, b}$ to be the inverse of $B_{\alpha, \beta}$.\\

By the chain rule, we obtain
\[
\begin{split}
&\frac{\partial}{\partial t} = \frac{\partial u}{\partial t}\frac{\partial }{\partial u} + \frac{\partial v}{\partial t}\frac{\partial }{\partial v} = \frac{\partial}{\partial u}+2t \frac{\partial}{\partial v}, \\&
\frac{\partial}{\partial s} = \frac{\partial u}{\partial s}\frac{\partial }{\partial u} + \frac{\partial v}{\partial s}\frac{\partial }{\partial v} = \frac{\partial}{\partial u}+2s \frac{\partial}{\partial v}.
\end{split}
\]
By direct computations, we obtain
\[
\large\begin{pmatrix}
\frac{\partial}{\partial t} \\[0.50em] \frac{\partial}{\partial s} \\[0.50em] \frac{\partial^2}{\partial t^2}\\[0.50em]
\frac{\partial^2}{\partial t \partial s}\\[0.50em]
\frac{\partial^2}{\partial s^2}\\[0.50em]
\vdots \\[0.50em]
\frac{\partial^k}{\partial t^k} \\[0.50em]
\vdots \\[0.50em]
\frac{\partial^k}{\partial s^k} \\[0.50em]
\end{pmatrix} =
\large\begin{pmatrix}
1 & 2\alpha & 0 & 0 & 0 &\cdots &0 & \cdots &0 \\[0.50em]
1 & 2\beta & 0 & 0 & 0  &\cdots &0& \cdots &0 \\[0.50em]
* & * & 1 & 4\alpha & 4\alpha^2 & \cdots &0 & \cdots & 0\\[0.50em]
* & * & 1 & 2\alpha+2\beta & 4\alpha \beta & \cdots &0&\cdots &0 \\[0.50em]
* & * & 1 & 4\beta & 4\beta^2 & \cdots &0&\cdots &0 \\[0.50em]
\vdots & \vdots & \vdots & \vdots & \vdots & \ddots &\vdots &\vdots &\vdots \\[0.50em]
* & * & * & * & * & \cdots & 1 &\cdots & (2\alpha)^k \\[0.50em]	
\vdots & \vdots & \vdots & \vdots & \vdots & \vdots & \vdots &\vdots & \vdots \\[0.50em]
* & * & * & * & * & \cdots & 1 &\cdots & (2\beta)^k \\[0.50em]
\end{pmatrix}
\large\begin{pmatrix}
\frac{\partial}{\partial u} \\[0.50em] \frac{\partial}{\partial v} \\[0.50em] \frac{\partial^2}{\partial u^2}\\[0.50em]
\frac{\partial^2}{\partial u \partial v}\\[0.50em]
\frac{\partial^2}{\partial v^2}\\[0.50em]
\vdots \\[0.50em]
\frac{\partial^k}{\partial u^k} \\[0.50em]
\vdots \\[0.50em]
\frac{\partial^k}{\partial v^k} \\[0.50em]
\end{pmatrix}.
\]
Here, every $*$ denotes a number that we will not keep track of.
The matrix $B_{\alpha,\beta}$ has the following form
\[
B_{\alpha,\beta} = \begin{pmatrix}
A_{2 \times 2} & 0_{2 \times 3} & \cdots &0_{2 \times k} \\
* & A_{3 \times 3} & \cdots & 0_{3 \times k} \\
\vdots & \vdots & \ddots & \vdots \\
* & * & \cdots & A_{k,k}
\end{pmatrix}.
\]
Here, $A_{i \times i}$ is an $i \times i$ matrix and $0_{i \times j}$ is an $i \times j$ matrix whose components are all zero. Thus, to prove Lemma \ref{changeofvariables}, it suffices to show that $\det{(A_{i \times i})} \neq 0$ for all $i$.\\

We define the polynomials $$r_{j}(t,s) := (t+2\alpha s)^{i-1-j}(t+2\beta s)^{j}$$ for $j=0,\ldots,i-1$. Then the matrix $(A_{i\times i})^{\intercal}$ can be expressed as
\[
(A_{i \times i})^{\intercal} = 
\large\begin{pmatrix}
\frac{1}{(i-1)!0!}
\frac{\partial^{i-1} }{\partial t^{i-1} } \\[0.50em] 
\frac{1}{(i-2)!1!}
\frac{\partial^{i-1} }{\partial t^{i-2} \partial s } \\[0.50em]  \frac{1}{(i-3)!2!}
\frac{\partial^{i-1} }{\partial t^{i-3} \partial s^2 }\\[0.5em] \vdots \\[0.50em] 
\frac{1}{0!(i-1)!}
\frac{\partial^{i-1} }{\partial s^{i-1} }
\end{pmatrix}
\large\begin{pmatrix}
r_0 & r_1 & r_2 & \cdots & r_{i-1}
\end{pmatrix}.
\]
Without loss of generality, we may assume that $\beta \neq 0$.
We apply a change of variables:
\[
\begin{split}
t &\mapsto t =:\bar t  \\
s &\mapsto t+2\beta s =:\bar s.
\end{split}
\]
By the chain rule, we obtain
\[
\begin{split}
&\frac{\partial}{\partial t}=\frac{\partial \bar{t}}{\partial t}\frac{\partial}{\partial \bar{t}}+\frac{\partial \bar{s}}{\partial t}\frac{\partial}{\partial \bar{s}}=\frac{\partial}{\partial \bar{t}}+\frac{\partial}{\partial \bar{s}},
\\&
\frac{\partial }{\partial s}=
\frac{\partial \bar{t}}{\partial s}\frac{\partial}{\partial \bar{t}}+\frac{\partial \bar{s}}{\partial s}\frac{\partial}{\partial \bar{s}}=2\beta \frac{\partial}{\partial \bar{s}}.
\end{split}
\]
By direct computations, we obtain
\[
(A_{i\times i})^{\intercal} = 
\large\begin{pmatrix}
1 & * & * & \cdots & * \\
0 & 2\beta & * & \cdots & * \\
0 & 0 & 2^2\beta^2 & \cdots & * \\
\vdots & \vdots & \vdots & \ddots & \vdots \\
0 & 0 & 0 & \cdots & 2^{i-1}\beta^{i-1}
\end{pmatrix}
\large\begin{pmatrix}
\frac{1}{(i-1)!0!}
\frac{\partial^{i-1} }{\partial \bar t^{i-1} } \\[0.50em] 
\frac{1}{(i-2)!1!}
\frac{\partial^{i-1} }{\partial \bar t^{i-2} \partial \bar s } \\[0.50em]  \frac{1}{(i-3)!2!}
\frac{\partial^{i-1} }{\partial \bar t^{i-3} \partial \bar s^2 }\\[0.5em] \vdots \\[0.50em] 
\frac{1}{0!(i-1)!}
\frac{\partial^{i-1} }{\partial \bar s^{i-1} }
\end{pmatrix}
\large\begin{pmatrix}
r_0 & r_1 & r_2 & \cdots & r_{i-1}
\end{pmatrix}.
\]
We compute the derivatives of $r_j$ and obtain
\[
(A_{i\times i})^{\intercal}
=\large\begin{pmatrix}
1 & * & * & \cdots & * \\
0 & 2\beta & * & \cdots & * \\
\vdots & \vdots & \vdots & \ddots & \vdots \\
0 & 0 & 0 & \cdots & (2\beta)^{i-1}
\end{pmatrix}
\large\begin{pmatrix}
(1-\frac{2\alpha}{2\beta})^{i-1} & 0 & 0 & \cdots & 0 \\
* & (1-\frac{2\alpha}{2\beta})^{i-2} & 0 & \cdots & 0 \\
\vdots & \vdots & \vdots & \ddots & \vdots \\
* & * & * & \cdots & (1-\frac{2\alpha}{2\beta})^{0}
\end{pmatrix}.
\]
Thus, we obtain $\det{(A_{i\times i})} = (2\beta-2\alpha)^{\frac{i(i-1)}{2}}$, and this is non-vanishing whenever $\alpha \neq \beta$. This completes the proof of the lemma.
\end{proof}

Let us continue with the proof of Theorem \ref{linearalgebra}. We first consider the case $\zeta=0$. Note that in this case $(a,b)=(0,0)$, and what we need to prove becomes 
\begin{equation}\label{leadingterm}
	p_i(u,v) =\left\{ \begin{array}{ll}
		\frac{2k+1}{2^{k}}uv^{k} + e_{2k+1}(u,v)& \textrm{with $k=\frac{i-1}{2}$ when $i$ is odd}\\[3pt]
		\frac{1}{2^{k}}v^{k+1}+ \bar e_{2k+1}(u,v)& \textrm{with $k=\frac{i-2}{2}$ when $i$ is even}.
	\end{array} \right.
\end{equation}
Here, for $k \geq 1$, $e_{2k+1}, \bar e_{2k+1}$ are some polynomials whose lowest degree is greater than or equal to $k+2$. The functions $e_1,\bar e_1$ are defined to be identically zero. Here, $\bar e_{2k+1}$ does not indicate the complex conjugation of $e_{2k+1}$. 

We prove \eqref{leadingterm} by an inductive argument. The base cases $i=1,2$ of the induction are trivial. Note that in this case $k=0$. Next, by Newton's identity, for every $i \geq 3$, we have 
\[
p_{i}(u,v)=up_{i-1}(u,v)-(\frac{u^2-v}{2})p_{i-2}(u,v).
\]
Suppose that $k_0 \geq 0$ and \eqref{leadingterm} holds true for all $k$ with $ 0 \leq k \leq 2k_0$. We apply the above identity and the induction hypothesis, and obtain 
\[
\begin{split}
p_{2k_0+1}(u,v) &= up_{2k_0}(u,v) - \big(\frac{u^2-v}{2}\big)p_{2k_0-1}(u,v)
\\& = \frac{1}{2^{k_0-1}}uv^{k_0}+u\bar{e}_{2k_0-1}(u,v)
-\big(\frac{u^2-v}{2}\big)\bigg(\frac{2k_0-1}{2^{k_0-1}}uv^{k_0-1}+e_{2k_0-1}(u,v) \bigg)
\\& = \frac{2k_0+1}{2^{k_0}} uv^{k_0} + \biggl( -\frac{2k_0-1}{2^{k_0}}u^3v^{k_0-1} +u\bar e_{2k_0-1}(u,v)-(\frac{u^2-v}{2})e_{2k_0-1}(u,v) \biggr)
\\& =: \frac{2k_0+1}{2^{k_0}} uv^{k_0} + e_{2k_0+1}(u,v),
\end{split}
\]
and
\[
\begin{split}
p_{2k_0+2}(u,v) &= up_{2k_0+1}(u,v) - \big(\frac{u^2-v}{2}\big)p_{2k_0}(u,v)
\\&= u \bigg( \frac{2k_0+1}{2^{k_0}}uv^{k_0}+
e_{2k_0+1}(u,v) \bigg)-\big( \frac{u^2-v}{2}\big)
\bigg( \frac{1}{2^{k_0-1}}v^{k_0} +\bar{e}_{2k_0-1}(u,v) \bigg)
\\& = \frac{1}{2^{k_0}}v^{k_0+1} + \biggl( u \Big(
\frac{2k_0+1}{2^{k_0}}uv^{k_0} + e_{2k_0+1}(u,v) \Big)
-\frac{u^2v^{k_0}}{2^{k_0}}
-(\frac{u^2-v}{2})\bar e_{2k_0-1}(u,v) \biggr)
\\& =: \frac{1}{2^{k_0}}v^{k_0+1} + \bar e_{2k_0+1}(u,v).
\end{split}
\]
Note that $e_{2k_0+1},\bar e_{2k_0+1}$ are  polynomials whose lowest degrees are at least $k_0+2$. This closes the induction, and therefore finishes the proof of \eqref{leadingterm}.\\

Next, we consider the case that $ (a,b)=(u(\alpha,\beta),v(\alpha,\beta))$ for some $0 \leq \alpha,\beta \leq \zeta$ with $|\alpha -\beta| \geq \zeta K^{-1}$, where $\zeta>0$. 
Let $h$ be an arbitrary polynomial of two variables $u,v$:
$$
h(u,v) = \sum_{j=0}^{\infty}\sum_{i=0}^{j}c_{i,j,k}u^iv^{j-i}.
$$
We define a truncation of the polynomial $h$ at the degree $l$ by
$$
(h)_{l}(u,v) := \sum_{j=0}^{l}\sum_{i=0}^{j}c_{i,j,k}u^iv^{j-i}.
$$
For every function $g:\R^2 \rightarrow \C$ and $a,b \in \R$, we define the function $g^{a,b}(u,v)$ to be 
\begin{displaymath}
g^{a,b}(u,v):=g(a+u,b+v).
\end{displaymath}
We will show that for every $k \geq 1$ and $j=0,1$ there exist $w_{j,k}=(w_{1,j,k},\cdots,w_{2k,j,k}) \in \R^{2k}$ with $|w_{i,j,k}| \lesssim_{K} \zeta$ and some constant $C_{a,b,2k+1+j}$ such that
\beq\label{existence}
(p_{2k+1+j}^{a,b})_k(u,v) = C_{a,b,2k+1+j}+ \sum_{i=1}^{2k}
w_{i,j,k}(p_i^{a,b})_k(u,v)
\endeq
holds for every $u, v$. Let us first accept this claim. 
By an affine transformation, we can replace the surface $\M_{(a,b)}$ by
\[
\{(u,v,p_3'(u,v),\ldots,p_n'(u,v) ) \},
\]
where
\[
p_{2k+1+j}'(u,v) := p_{2k+1+j}^{a,b}(u,v)-\sum_{i=1}^{2k}w_{i,j,k}(p_i^{a,b})(u,v)
\]
for $j=0,1$ and $k \geq 1$. Here, $w_{j,k}=(w_{1,j,k},\ldots,w_{2k,j,k})$ is the vector satisfying the claim \eqref{existence}.
By the claim \eqref{existence}, we obtain
\begin{displaymath}
\begin{split}
p'_{2k+1+j}(u,v)&=
p_{2k+1+j}^{a,b}(u,v) - \sum_{i=1}^{2k} w_{i,j,k} (p_i^{a,b})(u,v)
\\&
= C_{a,b,2k+1+j}+
\big( (p_{2k+1+j}^{a,b})(u,v)- (p_{2k+1+j}^{a,b})_{k}(u,v) \big) - \sum_{i=1}^{2k} w_{i,j,k} \big( (p_i^{a,b})(u,v)-(p_i^{a,b})_{k}(u,v) \big)
\\&
= C_{a,b,2k+1+j}+ (p_{2k+1+j}^{a,b})(u,v)- (p_{2k+1+j}^{a,b})_{k}(u,v)
+O_K(\zeta |(u,v)|^{k+1}).
\end{split}
\end{displaymath}
Note that the error is harmless. 
We first consider the case when $j=1$. By \eqref{leadingterm}, we obtain
\[
(p_{2k+2}^{a,b})(u,v)-(p_{2k+2}^{a,b})_k(u,v)
=\frac{1}{2^k}v^{k+1}+(\bar{e}_{2k+1}^{a,b})(u,v)-(\bar{e}_{2k+1}^{a,b})_k(u,v).
\]
Since $\bar{e}_{2k+1}^{a,b}$ is a polynomial of degree greater than or equal to $k+2$ and $|a|,|b| \lesssim \zeta$, we obtain
\[
(\bar{e}_{2k+1}^{a,b})(u,v)-(\bar{e}_{2k+1}^{a,b})_k(u,v)=
O_K(\zeta |(u,v)|^{k+1} + |(u,v)|^{k+2} ).
\]
Hence, we finally obtain
\[
p'_{2k+2}(u,v)=C_{a,b,2k+2}+\frac{1}{2^k}v^{k+1}+O_K(\zeta |(u,v)|^{k+1} + |(u,v)|^{k+2} ).
\]
We next consider the case when $j=0$. By \eqref{leadingterm}, we obtain
\[
(p_{2k+1}^{a,b})(u,v)-(p_{2k+1}^{a,b})_{k}(u,v)=
\frac{2k+1}{2^{k}}uv^{k}+({e}_{2k+1}^{a,b})(u,v)-
({e}_{2k+1}^{a,b})_{k}(u,v).
\]
Since ${e}_{2k+1}^{a,b}$ is a polynomial of degree greater or equal to $k+2$, we get
\[
({e}_{2k+1}^{a,b})(u,v)-
({e}_{2k+1}^{a,b})_{k}(u,v)
=O_K(\zeta |(u,v)|^{k+1} + |(u,v)|^{k+2} ).
\]
Hence, we finally obtain
\[
p_{2k+1}'(u,v)=C_{a,b,2k+1}+ \frac{2k+1}{2^k}uv^k +O_K(\zeta |(u,v)|^{k+1} + |(u,v)|^{k+2} ).
\]
This finishes the proof of Theorem \ref{linearalgebra}, modulo the proof of the claimed representation \eqref{existence}, which we carry out now.\\

We reformulate this problem by using partial derivatives.
Recall that the gradient $\nabla_{u,v}(f)$ defined in \eqref{grad} is a column vector.
We will show that for every $k \geq 1$ and $j=0,1$ there exists $w'=(w_1',\ldots,w_{2k}',1) \in \R^{2k+1}$ with $|w_i'
| \lesssim_{K} \zeta$  such that 
\beq\label{beforelemma}
\begin{pmatrix}[c|c|c|c|c]
\nabla_{u,v} (p_{1} )|_{(a,b)}  & \nabla_{u,v} (p_{2} )|_{(a,b)} & \cdots & \nabla_{u,v} (p_{2k} )|_{(a,b)} & \nabla_{u,v} (p_{2k+1+j} )|_{(a,b)} \\[3pt]
\nabla_{u,v}^2 (p_{1} )|_{(a,b)} & \nabla_{u,v}^2 (p_{2} )|_{(a,b)} & \cdots & \nabla_{u,v}^2 (p_{2k} )|_{(a,b)} & \nabla_{u,v}^2 (p_{2k+1+j} )|_{(a,b)} \\
\vdots & \vdots & \ddots & \vdots & \vdots \\
\nabla_{u,v}^k (p_{1} )|_{(a,b)} & \nabla_{u,v}^k (p_{2} )|_{(a,b)} & \cdots & \nabla_{u,v}^k (p_{2k} )|_{(a,b)} & \nabla_{u,v}^k (p_{2k+1+j} )|_{(a,b)}
\end{pmatrix} w'^\intercal  =: Mw'^\intercal =0.
\endeq
We rewrite the matrix $M'$ by
\[
M' = \begin{pmatrix}
\nabla_{u,v} & \nabla^2_{u,v} & \cdots & \nabla^k_{u,v}
\end{pmatrix}^\intercal
\begin{pmatrix}
p_{1} & p_{2} & \cdots & p_{2k}& p_{2k+1+j}
\end{pmatrix}|_{(a,b)}.
\]
By applying Lemma \ref{changeofvariables} and  multiplying the matrix $A_{a,b}^{-1}$ on both sides of \eqref{beforelemma}, it suffices to show that there exists $w'=(w_1',\ldots,w_{2k}',1) \in \R^{2k+1}$ with $|w_i'| \lesssim_{K} \zeta$ such that
	\[
\begin{pmatrix}
\nabla_{t,s} & \nabla^2_{t,s} & \cdots & \nabla^k_{t,s}
\end{pmatrix}^\intercal \begin{pmatrix}
p_{1} & p_{2} & \cdots & p_{2k} & p_{2k+1+j}
\end{pmatrix}|_{(\alpha,\beta)} w'^\intercal =: Pw'^\intercal= 0.
\]
Since $p_i(t,s)=t^i+s^i$, by direct computations, we obtain that
\begin{displaymath}
\begin{pmatrix}
\nabla_{t,s} & \nabla^2_{t,s} & \cdots & \nabla^k_{t,s}
\end{pmatrix}^\intercal
\begin{pmatrix}
p_{1} & p_{2} & \cdots & p_{2k} & p_{2k+1+j}
\end{pmatrix}
\end{displaymath}
is equal to 
\begin{displaymath}
\begin{split}
\large\begin{pmatrix}
	\frac{\partial p_1}{\partial t} & \frac{\partial p_2}{\partial t} & \cdots &\frac{\partial p_{2k+1+j}}{\partial t} \\[0.50em]
	\frac{\partial p_1}{\partial s} & \frac{\partial p_2}{\partial s} & \cdots &\frac{\partial p_{2k+1+j}}{\partial s} \\[0.50em]
	\frac{\partial^2 p_1}{\partial t^2} & \frac{\partial^2 p_2}{\partial t^2}  & \cdots & \frac{\partial^2 p_{2k+1+j}}{\partial t^2} \\[0.50em]
	0 & 0 &\cdots &0 \\[0.50em]
	\frac{\partial^2 p_1}{\partial s^2} & \frac{\partial^2 p_2}{\partial s^2} &\cdots &\frac{\partial^2 p_{2k+1+j}}{\partial s^2} \\[0.50em]
	\vdots & \vdots &\ddots &\vdots \\[0.50em]
	\frac{\partial^k p_1}{\partial t^k} & \frac{\partial^k p_2}{\partial t^k}  &\cdots & \frac{\partial^k p_{2k+1+j}}{\partial t^k} \\[0.50em]	
	0 & 0 &\cdots & 0 \\[0.50em]	
		\vdots & \vdots &\ddots & \vdots \\[0.50em]
	0 & 0  &\cdots & 0 \\[0.50em]	
	\frac{\partial p_1}{\partial s^k} & \frac{\partial p_2}{\partial s^k} & \cdots & \frac{\partial p_{2k+1+j}}{\partial s^k} \\[0.50em]
\end{pmatrix}.
\end{split}
\end{displaymath}
To simplify the notation, we reorder the rows by applying a linear transformation, and we may assume that $P$ is the matrix defined by
	\begin{gather*}
		\large\begin{pmatrix}
			\frac{\partial p_1}{\partial t}(\alpha,\beta) & \frac{\partial p_2}{\partial t}(\alpha,\beta) & \cdots &\frac{\partial p_{2k+1+j}}{\partial t}(\alpha,\beta) \\[0.50em]
			\frac{\partial^2 p_1}{\partial t^2}(\alpha,\beta) & \frac{\partial^2 p_2}{\partial t^2}(\alpha,\beta)  & \cdots & \frac{\partial^2 p_{2k+1+j}}{\partial t^2}(\alpha,\beta) \\[0.50em]
			\vdots & \vdots &\ddots &\vdots \\[0.50em]
			\frac{\partial^k p_1}{\partial t^k}(\alpha,\beta) & \frac{\partial^k p_2}{\partial t^k}(\alpha,\beta)  &\cdots & \frac{\partial^k p_{2k+1+j}}{\partial t^k}(\alpha,\beta) \\[0.50em]
			\frac{\partial p_1}{\partial s}(\alpha,\beta) & \frac{\partial p_2}{\partial s}(\alpha,\beta) & \cdots &\frac{\partial p_{2k+1+j}}{\partial s}(\alpha,\beta) \\[0.50em]
			\frac{\partial^2 p_1}{\partial s^2}(\alpha,\beta) & \frac{\partial^2 p_2}{\partial s^2}(\alpha,\beta) &\cdots &\frac{\partial^2 p_{2k+1+j}}{\partial s^2}(\alpha,\beta) \\[0.50em]
			\vdots & \vdots &\ddots &\vdots \\[0.50em]
			\frac{\partial^k p_1}{\partial s^k}(\alpha,\beta) & \frac{\partial^k p_2}{\partial s^k}(\alpha,\beta)  &\cdots & \frac{\partial^k p_{2k+1+j}}{\partial s^k}(\alpha,\beta) \\[0.50em]	
			0 & 0 &\cdots & 0 \\[0.50em]	
			\vdots & \vdots &\ddots & \vdots \\[0.50em]
			0 & 0  &\cdots & 0 \\[0.50em]	
		\end{pmatrix}.
	\end{gather*}
	We rewrite the matrix $P$ by
	\[
	\begin{split}
	P&=\begin{pmatrix}
		\frac{\partial }{\partial t} & \cdots & \frac{\partial^k }{\partial t^k} & \frac{\partial }{\partial s} & \cdots & \frac{\partial^k }{\partial s^k} & 0 & \cdots & 0
	\end{pmatrix}^\intercal  \begin{pmatrix}
	p_{1} & p_{2} & \cdots & p_{2k} & p_{2k+1+j}
\end{pmatrix}|_{(\alpha,\beta)}
\\&=
\begin{pmatrix}[c|c|c|c|c|c|c|c|c|c|c]
\gamma^{(1)}(\alpha) & \gamma^{(2)}(\alpha) & \cdots &  \gamma^{(k)}(\alpha) & \gamma^{(1)}(\beta) & 
\gamma^{(2)}(\beta) & \cdots & 
\gamma^{(k)}(\beta) & 0 & \cdots & 0\,
\end{pmatrix}^{\intercal},
	\end{split}
	\]
where $\gamma(t)=(t,t^2,\ldots,t^{2k},t^{2k+1+j})^{\intercal}$, and $\gamma^{(i)}$ indicates the $i$th derivative of $\gamma(t)$.\\

To proceed, we need to compute the determinant of a submatrix of the matrix $P$. This will rely on a formula of the determinant of the generalized Vandermonde matrix due to Kalman \cite{K84}.

\begin{lem}[\cite{K84}]\label{kalman}
	Let $k \geq 1$.
	Let $\tilde{\gamma}(t) = (t,t^2,t^3,\ldots,t^{2k})^{\intercal}$. Then 
	\[
	\det
	\begin{pmatrix}[c|c|c|c|c|c|c|c]
	\tilde{\gamma}^{(1)}(x) & \frac{1}{2!} \tilde{\gamma}^{(2)}(x) & \cdots & \frac{1}{k!} \tilde{\gamma}^{(k)}(x) & \tilde{\gamma}^{(1)}(y) & \frac{1}{2!}\tilde{\gamma}^{(2)}(y) & \cdots & \frac{1}{k!}\tilde{\gamma}^{(k)}(y)
	\end{pmatrix} = (x-y)^{k^2}.
	\]
	Here, $\tilde{\gamma}^{(i)}$ indicates the $i$th derivative of $\tilde{\gamma}(t)$.
\end{lem}
%
%
%
%
%

We first consider the case $\zeta=1$. In this case, it suffices to show that the determinant of the upper left $2k \times 2k$ matrix is non-vanishing whenever $\alpha \neq \beta$, which follows immediately from Lemma \ref{kalman}. Thus, there exists $w'=(w_1',\ldots,w_{2k}',1) \in \R^{2k+1}$ with $|w_i'| \lesssim_K 1$ such that
$Pw'^{T}=0$.

Next we consider the general case $0<\zeta<1$. We write $(\alpha,\beta) = (\zeta\bar\alpha,\zeta\bar\beta)$ so that $0\leq \bar\alpha,\bar\beta \leq 1$ and $|\bar\alpha-\bar\beta| \geq K^{-1}$. We apply the result of the case $\zeta=1$ to $(\bar\alpha,\bar\beta)$ and obtain that there exists $\bar{w}=(\bar{w}_1,\ldots,\bar{w}_{2k},1)$ with $|\bar{w}_i| \lesssim_{K} 1$ such that
\[
\begin{pmatrix}[c|c|c|c|c|c|c|c|c|c|c]
\gamma^{(1)}(\bar\alpha) &  \gamma^{(2)}(\bar\alpha) & \cdots &  \gamma^{(k)}(\bar\alpha) & \gamma^{(1)}(\bar\beta) & \gamma^{(2)}(\bar\beta) & \cdots & \gamma^{(k)}(\bar\beta) & 0 & \cdots & 0
\end{pmatrix}^{\intercal} \bar{w}^{\intercal} = 0.
\]
We put $w' = (w_1',\ldots,w_{2k}',1) := (\zeta^{2k+j}\bar{w}_1,\zeta^{2k-1+j}\bar{w}_2,\ldots,\zeta^{1+j}\bar{w}_{2k},1)$. Note that $|w_i'| \lesssim_{K} \zeta$. By the construction, we obtain
\[
\begin{pmatrix}[c|c|c|c|c|c|c|c|c|c|c]
\gamma^{(1)}(\zeta\bar\alpha) &  \gamma^{(2)}(\zeta\bar\alpha) & \cdots &  \gamma^{(k)}(\zeta\bar\alpha) & \gamma^{(1)}(\zeta\bar\beta) & \gamma^{(2)}(\zeta\bar\beta) & \cdots & \gamma^{(k)}(\zeta\bar\beta) & 0 & \cdots & 0
\end{pmatrix}^{\intercal} w'^{\intercal} = 0.
\]
This completes the proof of Theorem \ref{linearalgebra}.
\end{proof}

\section{Proof of Proposition \ref{dectwodim}}
%
%
%
%

In this section we will finish the proof of Proposition \ref{dectwodim}. Here we will see the motivation of restricting both intervals $J_1$ and $J_2$ to the small interval $[0, \delta^{\epsilon}]$. Roughly speaking, when both $J_1$ and $J_2$ are close to the origin, we are able to approximate the relevant manifold $\mc{M}$ (defined in \eqref{twodimmanifold}) by the moment manifold $\mc{M}_0$ (see \eqref{monomial_manifold}). One advantage of working with the manifold $\mc{M}_0$ is that it is translation-invariant.  Moreover, certain sharp decoupling inequalities for such a manifold have already been established in \cite{GZ18}. The proof there relies crucially on the fact that the manifold is a moment manifold and a translation-invariant manifold. In sharp contrast, the manifold $\mc{M}$ is neither a moment manifold nor a translation-invariant manifold. 

Having a small parameter $\delta^{\epsilon}$ as above will create enough room for a bootstrapping argument (see \eqref{PSinductionhypothesis}). Such kind of a bootstrapping argument can be dated back to the work of Pramanik and Seeger \cite{PS07}. \\

Recall that we need to show that
\beq\label{twodimdec}
\big\|E_{L(J_1,J_2)}^{\mathcal{M}}g\big\|_{L^{p}(w_{B^r})}
\leq C_{p,K,\epsilon} \delta^{-2(\frac12-\frac{1}{p})-C\epsilon }
\biggl(\sum_{\substack{\square \subset [0,1]^2\\ l(\square)=\delta} } \big\|E_{\square}^{\mathcal{M}}g\big\|_{L^{p}(w_{B^r})}^{p} \biggr)^{\frac{1}{p}}.
\endeq
Here it is important to keep in mind that the boxes $\Box$ that appear in the right hand side of \eqref{twodimdec} all have non-empty intersections with $L(J_1, J_2)$. To prove \eqref{twodimdec}, we will apply an inductive argument and prove 
\beq\label{PSinductionhypothesis}
\begin{split}
\big\|E_{L(J_1,J_2)}^{\mathcal{M}}g\big\|_{L^{p}(w_{B^r})}
& \leq (C_{p,\epsilon}C_{p,K,\epsilon})^{4m} \delta^{-C\epsilon }\delta^{-{2\frac{m\epsilon}{r}}(\frac12-\frac1p)-{m\epsilon^2}}\\
& 
\biggl(\sum_{\substack{R \cap L(J_1,J_2) \neq \emptyset\\ l(R)=\delta^{\frac{m\epsilon}{r}} }} \big\|E_{R}^{\mathcal{M}}g\big\|_{L^{p}(w_{B^r})}^{p} \biggr)^{\frac{1}{p}}
\end{split}
\endeq
for every integer $m$ with $r \leq m \leq r\epsilon^{-1}$. The desired inequality \eqref{twodimdec} follows from \eqref{PSinductionhypothesis} with $m=r\epsilon^{-1}$.\\

Let us start with proving the base case of \eqref{PSinductionhypothesis}, that is, the case $m=r$. This follows from $L^2$ orthogonality and interpolation with a trivial $L^{\infty}$ bound: 
\beqq
\begin{split}
\big\|E_{L(J_1,J_2)}^{\mathcal{M}}g\big\|_{L^{p}(w_{B^r})}
\leq C_{p,K,\epsilon} \delta^{-C\epsilon }
\biggl(\sum_{R \cap L(J_1,J_2) \neq \emptyset: l(R)=\delta^{\epsilon} } \big\|E_{R}^{\mathcal{M}}g\big\|_{L^{p}(w_{B^r})}^{p} \biggr)^{\frac{1}{p}}.
\end{split}
\endeqq
%
%
%
%
%
Suppose that we have proven \eqref{PSinductionhypothesis} for some $m=m_0<r\epsilon^{-1}$. We will show that \eqref{PSinductionhypothesis} holds true for $m=m_0+1$. By the induction hypothesis, we have
\beq\label{induction_hypo}
\begin{split}
\big\|E_{L(J_1,J_2)}^{\mathcal{M}}g\big\|_{L^{p}(w_{B^r})}
& \leq (C_{p,\epsilon}C_{p,K,\epsilon})^{4m_0} \delta^{-C\epsilon }\delta^{-{2\frac{m_0\epsilon}{r}}(\frac12-\frac1p)-{m_0\epsilon^2}}\\
&\biggl(\sum_{R \cap L(J_1,J_2) \neq \emptyset: l(R)=\delta^{\frac{m_0\epsilon}{r}} } \big\|E_{R}^{\mathcal{M}}g\big\|_{L^{p}(w_{B^r})}^{p} \biggr)^{\frac{1}{p}}.
\end{split}
\endeq
Fix a square $R$ intersecting $L(J_1,J_2)$ with side length $\delta^{m_0\epsilon/r}$. For simplicity, we put $\gamma=\frac{m_0\epsilon}{r}$. We claim that
\beq\label{claim}
\big\|E_{R}^{\mathcal{M}}g\big\|_{L^{p}(w_{B^r})}
\leq (C_{p,\epsilon}C_{p,K,\epsilon})^{4} (\delta^{\frac{\epsilon}{r}})^{-2(\frac12-\frac{1}{p})-\epsilon^2 }
\biggl(\sum_{\substack{R' \subset  R : R' \cap L(J_1,J_2) \neq \emptyset,\\ l(R')=\delta^{\gamma+(\epsilon/r)} }} \big\|E_{R'}^{\mathcal{M}}g\big\|_{L^{p}(w_{B^r})}^{p} \biggr)^{\frac{1}{p}}.
\endeq
This claim, combined with \eqref{induction_hypo} will finish the proof of \eqref{PSinductionhypothesis} with $m=m_0+1$, thus close the induction. \\

It remains to prove \eqref{claim}. Suppose that $R=(a,b)+[0,\delta^{\gamma}]^2$ for some point $(a, b)$. 
Define
$
h(u,v):=g( u+a,v+b).
$
By a change of variables, the claimed estimate \eqref{claim} follows from
\begin{gather}\label{claim2}
\big\|E_{R_1}^{\mathcal{M}_{(a, b)}}h\big\|_{L^{p}(w_{B^r})}
\leq (C_{p,\epsilon}C_{p,K,\epsilon})^3 (\delta^{\frac{\epsilon}{r}})^{-2(\frac12-\frac{1}{p}) -\epsilon^2}
\biggl(\sum_{\substack{R_2 \subset  [0,\delta^{\gamma}]^2 \\ l(R_2)=\delta^{\gamma+(\epsilon/r)} }} \big\|E_{R_2}^{\mathcal{M}_{(a, b)} }h\big\|_{L^{p}(w_{B^r})}^{p} \biggr)^{\frac{1}{p}}.
\end{gather}
Here, $R_1=[0,\delta^{\gamma}]^2$, and the manifold $\mc{M}_{(a,b)}$ is defined at the beginning of Section 3.
According to Theorem 3.1 with $\zeta=\delta^{\epsilon}$, we obtain
\[
\mathcal{M}_{(a, b)} \cong_M \big\{ (u,v,q_3(u,v),\ldots,q_n(u,v) ) \big\}=:\mathcal{M}'_{(a, b)},
\]
where
\begin{displaymath}
q_i(u,v) = \left\{ \begin{array}{ll}
\frac{2k+1}{2^{k}}uv^{k} + O_{K}\big(\delta^{\epsilon}|(u,v)|^{k+1}+ |(u,v)|^{k+2} \big)& \textrm{with $k=\frac{i-1}{2}$ when $i$ is odd}\\[3pt]
\frac{1}{2^{k}}v^{k+1}+ O_{K}\big(\delta^{\epsilon}|(u,v)|^{k+1} + |(u,v)|^{k+2} \big)& \textrm{with $k=\frac{i-2}{2}$ when $i$ is even}.
\end{array} \right.
\end{displaymath}
We now apply the change of variables 
$
(u,v) \mapsto \delta^{\gamma}(u,v)$. Denote  $\tilde{h}(u,v):=h(\delta^{\gamma}u,\delta^{\gamma}v)$ and  $R_3:=[0,1]^2$. 
By a simple scaling argument, \eqref{claim2} follows from
\beq\label{iteration}
\begin{split}
\big\|E_{R_3}^{\mathcal{M}'}\tilde{h}\big\|_{L^{p}(w_{B^{r-r\gamma} })}
\leq
(C_{p,\epsilon}C_{p,K,\epsilon})^2 (\delta^{\frac{\epsilon}{r}})^{-2(\frac12-\frac{1}{p}) -\epsilon^2}
\biggl(\sum_{R_4 \subset  [0,1]^2 : l(R_4)=\delta^{\epsilon/r} } \big\|E_{ R_4 }^{\mathcal{M}'}\tilde{h}\big\|_{L^{p}(w_{B^{r-r\gamma} })}^{p} \biggr)^{\frac{1}{p}}.
\end{split}
\endeq
Here,
\[
\mathcal{M}'= \big\{ (u,v,\bar{q}_3(u,v),\ldots,\bar{q}_n(u,v) ): (u,v) \in [0,1]^2 \big\},
\]
with
\begin{displaymath}
\bar{q}_i(u,v) = \left\{ \begin{array}{ll}
\frac{2k+1}{2^{k}}uv^{k} + O_{K}\big(\delta^{\epsilon}|(u,v)|^{k+1}\big)& \textrm{with $k=\frac{i-1}{2}$ when $i$ is odd}\\[3pt]
\frac{1}{2^{k}}v^{k+1}+ O_{K}\big(\delta^{\epsilon}|(u,v)|^{k+1} \big)& \textrm{with $k=\frac{i-2}{2}$ when $i$ is even}.
\end{array} \right.
\end{displaymath}
Define a new manifold 
\beq\label{monomial_manifold}
\mathcal{M}_0 := \big\{ (u,v,Q_3(u,v),\ldots,Q_n(u,v)): (u,v)\in[0,1]^2 \big\},
\endeq
where
\begin{displaymath}
Q_i(u,v) := \left\{ \begin{array}{ll}
\frac{2k+1}{2^{k}}uv^{k} & \textrm{with $k=\frac{i-1}{2}$ when $i$ is odd}\\[3pt]
\frac{1}{2^{k}}v^{k+1} & \textrm{with $k=\frac{i-2}{2}$ when $i$ is even}.
\end{array} \right.
\end{displaymath}
It is straightforward to see that the distance between $\mc{M}'$ and $\mc{M}$ is $O_{K}(\delta^{\epsilon})$. By the uncertainty principle, the errors $O_{K}(\delta^{\epsilon})$ are negligible on the ball $B_{\delta^{-\epsilon}}$. Moreover, $r-r\gamma\ge \epsilon$. 
Therefore, to prove \eqref{iteration}, it suffices to prove the same estimate with $\mc{M}_0$ in place of $\mc{M}'$ in it. However, such an estimate has already been established in \cite{GZ18} (see Theorem 1.1 and Example 1.4 therein). This concludes the proof of \eqref{claim2}. 


\end{document}